\newtheorem{thm}{Theorem}[section]
\newtheorem{lem}[thm]{Lemma}
\newtheorem{prop}[thm]{Proposition}
\newtheorem{cor}[thm]{Corollary}
\theoremstyle{definition}
\newtheorem{dfn}[thm]{Definition}
\newtheorem{rem}[thm]{Remark}
\newtheorem{conv}[thm]{Convention}
\theoremstyle{remark}
\newtheorem*{ac}{Acknowlegments}
\numberwithin{equation}{thm}
\def\ass{\operatorname{Ass}}
\def\supp{\operatorname{Supp}}
\def\ker{\operatorname{Ker}}
\def\depth{\operatorname{depth}}
\def\height{\operatorname{ht}}
\def\le{\leqslant}
\def\m{\mathfrak{m}}
\def\p{\mathfrak{p}}
\def\q{\mathfrak{q}}
\def\m{\mathfrak{m}}
\def\a{\mathfrak{a}}
\def\b{\mathfrak{b}}
\def\c{\mathfrak{c}}
\def\s{\mathfrak{s}}
\def\g{\mathfrak{g}}
\def\spec{\operatorname{Spec}}
\def\syz{\Omega}
\def\U{\mathrm{U}}
\def\C{\mathrm{C}}
\def\V{\mathrm{V}}
\def\Rfd{\operatorname{Rfd}}
\def\cmd{\operatorname{cmd}}
\def\grade{\operatorname{grade}}
\def\dim{\operatorname{dim}}
\begin{document}
\allowdisplaybreaks
\title{Faltings' annihilator theorem and almost Cohen-Macaulay rings}
\author{Glenn ando}
\address{Graduate School of Mathematics, Nagoya University, Furocho, Chikusaku, Nagoya 464-8602, Japan}
\email{m21003v@math.nagoya-u.ac.jp}
\begin{abstract}
Faltings' annihilator theorem is an important result in local cohomology theory.
Recently, Doustimehr and Naghipour generalized the Falitings' annihilator theorem.
They proved that if $R$ is a homomorphic image of a Gorenstein ring, then $f_\a^\b(M)_n = \lambda_\a^\b(M)_n$, where $f_\a^\b(M)_n := \inf\{i \in \mathbb{N} \mid \dim{\supp(\b^t H_\a^i(M))} \geq n \text{ for all } t\in \mathbb{N}\}$ and $\lambda_\a^\b(M)_n := \inf\{\lambda_{\a R_\p}^{\b R_\p}(M_\p) \mid \p\in\spec{R} \text{ with } \dim{R/\p} \geq n\}$.
In this paper, we study the relation between $f_\a^\b(M)_n$ and $\lambda_\a^\b(M)_n$, and prove that if $R$ is an almost Cohen-Macaulay ring, then $f_\a^\b(M)_n \geq \lambda_\a^\b(M)_n - \cmd{R}$.
Using this result, we prove that if $R$ is a homomorphic image of a Cohen-Macaulay ring, then $f_\a^\b(M)_n = \lambda_\a^\b(M)_n$.
\end{abstract}
\maketitle
\section{Introduction}
Throughout this paper, let $R$ be a commutative noetherian ring.

The annihilators of local cohomology modules have been widely studied in local cohomology theory.
For example, Faltings' annihilator theorem \cite{Fal} states that if $R$ is a homomorphic image of a regular ring and $M$ is a finitely generated $R$-module, then there exists an integer $n$ such that $\b^n H_{\a}^i(M) = 0$ for all ideals $\a$ and $\b$ of $R$ and for all $i < \inf\{\depth M_\p + \height(\a + \p)/\p \mid \p \in \spec{R}\setminus\V(\b) \}$; see also \cite[Theorem 9.4.16]{BS}.
In other words, it states that $f_\a^\b(M) = \lambda_\a^\b(M)$, where 
\begin{align*}
    f_\a^\b(M) &:= \inf\{i \in \mathbb{N} \mid \b^t H_\a^i(M) \neq 0 \text{ for all } t \in \mathbb{N}\},\\
    \lambda_\a^\b(M) &:= \inf\{\depth M_\p + \height(\a + \p)/\p \mid \p \in \spec{R}\setminus\V(\b) \}.
\end{align*}
In 2004, Khashyarmanesh and Salarian \cite{sar} proved that Faltings' annihilator theorem holds over a homomorphic image of a Gorenstein ring.
In 2008, Kawasaki proved a general theorem \cite[Theorem 1.1]{kawa}, which implies that Faltings' annihilator theorem holds over a homomorphic image of a Cohen-Macaulay ring.
Recently, Doustimehr and Naghipour \cite{Dou} generalized Falitings' annihilator theorem.
They defined the two invariants called the $n$-th $\b$-finiteness dimension of $M$ relative to $\a$ and the $n$-th $\b$-minimum $\a$-adjusted depth of $M$ by 
\begin{align*}
    f_\a^\b(M)_n &:= \inf\{i \in \mathbb{N} \mid \dim{\supp(\b^t H_\a^i(M))} \geq n \text{ for all } t\in \mathbb{N}\},\\
    \lambda_\a^\b(M)_n &:= \inf\{\lambda_{\a R_\p}^{\b R_\p}(M_\p) \mid \p\in\spec{R} \text{ with } \dim{R/\p} \geq n \}.
\end{align*}
Note that $f_\a^\b(M)_n$ and $\lambda_\a^\b(M)_n$ are both positive integers or $\infty$, that $f_\a^\b(M)_0 = f_\a^\b(M)$ and $\lambda_\a^\b(M)_0 = \lambda_\a^\b(M)$, and that $f_\a^\b(M)_n \leq \lambda_\a^\b(M)_n$.
They proved that if $R$ is a homomorphic image of a Gorenstein ring and $M$ is a finitely generated $R$-module, then $f_\a^\b(M)_n = \lambda_\a^\b(M)_n$ for all ideals $\a$ and $\b$ of $R$ with $\b \subseteq \a$.
In this paper, we investigate the relationship between $f_\a^\b(M)_n$ and $\lambda_\a^\b(M)_n$ over an almost Cohen-Macaulay ring by using the results about large restricted flat dimension given in \cite{GA}.
The main result is the following theorem; recall that the Cohen-Macaulay defect of $R$ is defined by
$$
\cmd{R} := \sup\{\height{\p}-\depth{R_{\p}}\mid\p\in\spec{R}\},
$$
and that $R$ is called almost Cohen-Macaulay if $\cmd{R} \le 1$.
\begin{thm}\label{11}
Assume that $R$ is an almost Cohen-Macaulay ring.
Let $\a$ and $\b$ be ideals of $R$ such that $\b \subseteq \a$, and let $M$ be a finitely generated $R$-module.
Then, for each $n\in\mathbb{N}$, 
$$
f_\a^\b(M)_n \geq \lambda_\a^\b(M)_n - \cmd{R}.
$$
\end{thm}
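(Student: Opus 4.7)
The strategy is to reduce the global inequality to a local one by localizing at a well-chosen prime, and then to deduce the local case from the large restricted flat dimension results of \cite{GA}. Setting $i := f_\a^\b(M)_n$, we have $\dim\supp(\b^t H_\a^i(M)) \geq n$ for every $t\in\mathbb{N}$. The first task is to extract a single prime $\p$ with $\dim R/\p \geq n$ such that $\p \in \supp(\b^t H_\a^i(M))$ for all $t$. Because $\b \subseteq \a$ and $\supp(H_\a^i(M)) \subseteq \V(\a)$, a Noetherian descending-chain argument applied to the decreasing family $\{\supp(\b^t H_\a^i(M))\}_t$ (in the spirit of the local-global principle implicit in \cite{Dou}) yields such a $\p$. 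Localizing at $\p$ gives $f_{\a R_\p}^{\b R_\p}(M_\p) \leq i$ and $\lambda_{\a R_\p}^{\b R_\p}(M_\p) \geq \lambda_\a^\b(M)_n$, and since $\cmd R_\p \leq \cmd R$, the theorem reduces to the local case
$$
f_{\a R_\p}^{\b R_\p}(M_\p) \geq \lambda_{\a R_\p}^{\b R_\p}(M_\p) - \cmd R_\p.
$$

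Assume now $(R,\m)$ is a local almost Cohen--Macaulay ring, and pick a prime $\q \in \spec R \setminus \V(\b)$ that witnesses the infimum defining $\lambda_\a^\b(M)$. Applying the standard local-cohomology machinery of \cite{sar} and \cite{kawa} (spectral sequences, change-of-rings reductions, and a suitable localization-and-completion step) transforms the required inequality $\lambda_\a^\b(M) \leq f_\a^\b(M) + \cmd R$ into a refined depth-grade estimate
$$
\depth M_\q + \height\bigl((\a+\q)/\q\bigr) \leq \grade(\a R_\q, M_\q) + \cmd R,
$$
combined with the standard observation $\grade(\a R_\q, M_\q) \leq f_\a^\b(M)$ coming from non-vanishing of local cohomology at $\q$. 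When $R$ is Cohen--Macaulay this reduces to the classical depth-grade identity; under the weaker almost Cohen--Macaulay hypothesis, the large restricted flat dimension bounds of \cite{GA} supply precisely the $\cmd R$ correction, as $\Rfd$ differs from its Cohen--Macaulay analogue by at most $\cmd R$.

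The principal obstacle is establishing the sharp depth-grade inequality in the second step with correction term equal to $\cmd R$ (and not something larger such as $\dim R - \depth R$): one has to apply the bounds of \cite{GA} to the localization $M_\q$ while ensuring the defect does not accumulate through the localization. A subtler issue occurs in the first step, since $H_\a^i(M)$ is not finitely generated in general, so the extraction of a single prime $\p$ working for every $\b^t$ is not automatic from Noetherianity alone; this is handled by exploiting the cofinality-type property of local cohomology that is available under the assumption $\b \subseteq \a$.
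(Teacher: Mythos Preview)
Your reduction has a genuine gap at the very first step. Extracting a single prime $\p$ with $\dim R/\p \ge n$ that lies in $\supp(\b^t H_\a^i(M))$ for \emph{every} $t$ is exactly the assertion $f_\a^\b(M)^n \le f_\a^\b(M)_n$; combined with Proposition~\ref{100} this would give $f_\a^\b(M)_n = f_\a^\b(M)^n$, i.e.\ the local--global principle for the $\b$-finiteness dimension. That principle is \emph{not} known in general: Section~4 of the paper is devoted to establishing it only when $\dim R \le 4$ (Corollary~\ref{414}). The ``Noetherian descending-chain argument'' you invoke does not do the job: the modules $\b^t H_\a^i(M)$ are not finitely generated, their supports need not be Zariski-closed, and even a descending chain of closed subsets each of dimension $\ge n$ can have intersection of dimension $<n$. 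The hypothesis $\b\subseteq\a$ forces $\supp(\b^t H_\a^i(M))\subseteq\V(\a)$ but gives no ``cofinality'' leverage here. The paper avoids this obstacle entirely by working globally with the annihilator ideals $\c_t(M)$ built from the $\Rfd$-locus (Propositions~\ref{21}--\ref{23} and Lemma~\ref{24}) and running a Noetherian induction on $\a$, split into the cases $\dim R/\a \le n$ and $\dim R/\a > n$; no localization to a single prime is attempted.

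Your local step also does not hold up as written. If $\q\in\spec R\setminus\V(\b)$ and $\b\subseteq\a$, then $\a\not\subseteq\q$, so $\a R_\q = R_\q$ and $\grade(\a R_\q, M_\q)=\infty$; the displayed depth--grade inequality is then vacuous and cannot bound $f_\a^\b(M)$ from below. More broadly, the paper does not deduce the global statement from a local one; rather Corollary~\ref{555} (the case $n=0$) is obtained \emph{after} Theorem~\ref{25} by specialization, not before it.
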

The following is an immediate consequence of Theorem \ref{11}.
This is a generalization of \cite[Theorem 2.10]{sar}.
\begin{cor}\label{555}
Assume that $R$ is an almost Cohen-Macaulay ring.
Let $\a$ and $\b$ be ideals of $R$ such that $\b \subseteq \a$, and let $M$ be a finitely generated $R$-module.
Then, for each $n\in\mathbb{N}$, 
$$
f_\a^\b(M) \geq \lambda_\a^\b(M) - \cmd{R}.
$$
In particular, there exists $n\in\mathbb{N}$ such that $\b^n H_{\a}^i(M) = 0$ for all $i < \lambda_\a^\b(M) - \cmd{R}$.
\end{cor}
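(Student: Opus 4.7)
The plan is to derive the corollary directly from Theorem~\ref{11} by specializing the parameter $n$ to zero. First, I would observe that by the definitions,
$$
f_\a^\b(M)_0 = \inf\{i\in\mathbb{N}\mid \dim\supp(\b^t H_\a^i(M))\ge 0 \text{ for all }t\} = f_\a^\b(M),
$$
since $\dim\supp(N)\ge 0$ is equivalent to $N\neq 0$ for any finitely generated (or in general nonzero) $R$-module $N$ in the relevant local cohomology setting; the analogous identity $\lambda_\a^\b(M)_0 = \lambda_\a^\b(M)$ is noted already in the introduction. Plugging $n=0$ into Theorem~\ref{11} then yields the first inequality $f_\a^\b(M)\ge \lambda_\a^\b(M)-\cmd{R}$.

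For the ``in particular'' statement, I would argue as follows. If $\lambda_\a^\b(M)-\cmd R \le 0$ there is nothing to prove (take $n=1$, say). Otherwise the range of indices $i$ with $0\le i<\lambda_\a^\b(M)-\cmd R$ is finite. The first inequality shows that each such $i$ satisfies $i<f_\a^\b(M)$, so by definition of $f_\a^\b(M)$ there exists $n_i\in\mathbb{N}$ with $\b^{n_i}H_\a^i(M)=0$. Setting $n:=\max_i n_i$ produces a single exponent that annihilates $H_\a^i(M)$ for every $i$ in this finite range.

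I do not anticipate any real obstacle: the corollary is a formal specialization of Theorem~\ref{11}. The only minor point of care is the passage from ``$f_\a^\b(M)$ is a strict lower threshold'' to ``a uniform exponent $n$ exists''; this uses only the finiteness of the index range and the pigeonhole-style maximum, not any further homological input. The comparison with \cite[Theorem~2.10]{sar} is then immediate, since in the Cohen--Macaulay case $\cmd R = 0$ recovers Faltings' statement, while $\cmd R \le 1$ recovers the almost Cohen--Macaulay analogue with the expected defect shift.
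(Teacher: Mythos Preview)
Your approach is the same as the paper's: the paper's proof is the single sentence ``Put $n=0$ in Theorem~\ref{25}'', relying on the identities $f_\a^\b(M)_0=f_\a^\b(M)$ and $\lambda_\a^\b(M)_0=\lambda_\a^\b(M)$ already noted in the introduction, exactly as you do.

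One small point on the ``in particular'' clause: your sentence ``otherwise the range of indices $i$ with $0\le i<\lambda_\a^\b(M)-\cmd R$ is finite'' tacitly assumes $\lambda_\a^\b(M)<\infty$. In general $\lambda_\a^\b(M)$ may be infinite (e.g.\ when $\b$ is nilpotent, or when $\supp M\subseteq\V(\b)$), and then the index range is not finite. The fix is immediate: since $R$ is noetherian, $\a$ has finitely many generators, so $H_\a^i(M)=0$ for all $i$ larger than the number of generators of $\a$; hence only finitely many $n_i$ are ever needed and the maximum still exists. With this remark your argument is complete.
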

Theorem \ref{11} yields to the following corollary.
The latter assertion refines Doustimehr and Naghipour's result.
\begin{cor}\label{12}
Let $\a$ and $\b$ be ideals of $R$ such that $\b \subseteq \a$, and let $M$ be a finitely generated $R$-module.
\begin{enumerate}[\rm(1)]
    \item Assume that $R$ is a homomorphic image of an almost Cohen-Macaulay ring.
    Then, for each $n\in\mathbb{N}$,
    $$
    f_\a^\b(M)_n \geq \lambda_\a^\b(M)_n - 1.
    $$
    \item Assume that $R$ is a homomorphic image of a Cohen-Macaulay ring.
    Then, for each $n\in\mathbb{N}$,
     $$f_\a^\b(M)_n = \lambda_\a^\b(M)_n.
    $$
\end{enumerate}
\end{cor}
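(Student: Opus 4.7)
The plan is to reduce both parts to Theorem \ref{11} by lifting the data to the ambient (almost) Cohen--Macaulay ring. Write $R = S/I$, where $S$ is Cohen--Macaulay in case (2) and almost Cohen--Macaulay in case (1), and let $\pi : S \to R$ denote the quotient map. Setting $\a' := \pi^{-1}(\a)$ and $\b' := \pi^{-1}(\b)$ produces ideals of $S$ containing $I$ with $\b' \subseteq \a'$, and $M$ is regarded as a finitely generated $S$-module through $\pi$.

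The core of the argument is then to check that this change of rings leaves both invariants unchanged:
$$
f_{\a'}^{\b'}(M)_n = f_\a^\b(M)_n \quad\text{and}\quad \lambda_{\a'}^{\b'}(M)_n = \lambda_\a^\b(M)_n.
$$
For the $f$-invariant, since $I$ annihilates $M$ and therefore every $H_\a^i(M)$, one has $H_{\a'}^i(M) = H_\a^i(M)$ and $(\b')^t$ acts on this module exactly as $\b^t$; moreover the closed embedding $\spec R \hookrightarrow \spec S$ sending $\p \mapsto \pi^{-1}(\p)$ preserves $\dim R/\p = \dim S/\pi^{-1}(\p)$, so the support-dimensions coincide. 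For the $\lambda$-invariant, any $\p' \in \spec S$ with $\p' \not\supseteq I$ satisfies $M_{\p'} = 0$ (an element of $I\setminus\p'$ is both a unit in $S_{\p'}$ and an annihilator of $M$) and thus contributes $\infty$ to the infimum; the primes $\p' \supseteq I$ are in bijection with $\spec R$, and depth, height and the ambient quotient transfer verbatim along this bijection, so the same reasoning applied to the inner $\lambda_{\a' S_{\p'}}^{\b' S_{\p'}}(M_{\p'})$ shows it agrees with $\lambda_{\a R_\p}^{\b R_\p}(M_\p)$.

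Once these identifications are in hand, Theorem \ref{11} applied to the $S$-module $M$ with ideals $\a', \b'$ gives
$$
f_\a^\b(M)_n = f_{\a'}^{\b'}(M)_n \ge \lambda_{\a'}^{\b'}(M)_n - \cmd{S} = \lambda_\a^\b(M)_n - \cmd{S}.
$$
Part (1) is immediate from $\cmd{S} \le 1$, and part (2) follows from $\cmd{S} = 0$ together with the general inequality $f_\a^\b(M)_n \le \lambda_\a^\b(M)_n$ recorded in the introduction. The main obstacle is the careful handling of the $\lambda$-invariant under the change of rings --- in particular, justifying that primes of $S$ not lying over $\spec R$ do not pull down the infimum --- but this reduces to the observation that $M$ vanishes at such primes.
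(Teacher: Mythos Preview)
Your proof is correct and follows essentially the same approach as the paper: both arguments reduce to Theorem \ref{11} by passing from $R$ to the ambient (almost) Cohen--Macaulay ring $S$ and checking that the invariants $f_\a^\b(M)_n$ and $\lambda_\a^\b(M)_n$ are unchanged. The only difference is that the paper cites \cite[Lemmas 2.12 and 2.13]{Dou} for this change-of-rings step, whereas you spell out the argument directly.
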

Moreover, we give an elementary proof of the following assertion rather than using the result in \cite{kawa}.
\begin{cor}\label{777}
Let $\a$ and $\b$ be ideals of $R$ such that $\b \subseteq \a$, and let $M$ be a finitely generated $R$-module.
Assume that $R$ is a homomorphic image of a Cohen-Macaulay ring.
    Then,
     $$
     f_\a^\b(M) = \lambda_\a^\b(M).
     $$
\end{cor}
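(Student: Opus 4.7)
The plan is to reduce to the Cohen-Macaulay ring via the surjection and apply Theorem \ref{11} with $n = 0$; indeed this is the $n = 0$ case of Corollary \ref{12}(2), since $f_\a^\b(M) = f_\a^\b(M)_0$ and $\lambda_\a^\b(M) = \lambda_\a^\b(M)_0$, but I will spell the argument out for clarity. Since the inequality $f_\a^\b(M) \leq \lambda_\a^\b(M)$ holds unconditionally, it suffices to prove $f_\a^\b(M) \geq \lambda_\a^\b(M)$.

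Write $R = S/I$ with $S$ Cohen-Macaulay, let $\pi \colon S \twoheadrightarrow R$ be the quotient map, and set $\a' := \pi^{-1}(\a)$ and $\b' := \pi^{-1}(\b)$. Viewing $M$ as an $S$-module through $\pi$, we have $IM = 0$, so $\a$- and $\a'$-torsion agree on $M$, whence $H^i_{\a'}(M) = H^i_\a(M)$ as $S$-modules with $I$ acting trivially; moreover $(\b')^t$-multiplication and $\b^t$-multiplication coincide. Consequently
$$
f_{\a'}^{\b'}(M) \;=\; f_\a^\b(M),
$$
where the former is computed over $S$ and the latter over $R$.

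Next I would establish $\lambda_{\a'}^{\b'}(M) = \lambda_\a^\b(M)$. The assignment $\p \mapsto \p' := \pi^{-1}(\p)$ yields a bijection between $\spec R$ and $\V(I) \subseteq \spec S$, under which $R_\p \cong S_{\p'}/IS_{\p'}$, $M_\p = M_{\p'}$, and $R/\p \cong S/\p'$. Hence $\depth_{R_\p} M_\p = \depth_{S_{\p'}} M_{\p'}$ and $\height_R(\a+\p)/\p = \height_S(\a'+\p')/\p'$, and the condition $\p \notin \V(\b)$ corresponds to $\p' \notin \V(\b')$. For $\q \in \spec S \setminus \V(I)$, $M_\q = 0$, so $\depth_{S_\q} M_\q = +\infty$ and such primes do not affect the infimum defining $\lambda_{\a'}^{\b'}(M)$. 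This gives the desired identification.

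Since $S$ is Cohen-Macaulay, $\cmd S = 0$, and in particular $S$ is almost Cohen-Macaulay. Theorem \ref{11} applied to $S$, $\a'$, $\b'$, $M$ with $n = 0$ yields $f_{\a'}^{\b'}(M) \geq \lambda_{\a'}^{\b'}(M) - \cmd S = \lambda_{\a'}^{\b'}(M)$. Combining this with the two transfer identities above produces $f_\a^\b(M) \geq \lambda_\a^\b(M)$, completing the argument. The only real obstacle is bookkeeping in the transfer step, i.e., carefully matching the defining quantities of $f$ and $\lambda$ on the two sides of $\pi$; everything reduces to the fact that $M$ is annihilated by $I$ together with the correspondence between $\spec R$ and $\V(I) \subseteq \spec S$.
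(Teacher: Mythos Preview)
Your proposal is correct and follows essentially the same route as the paper: the paper obtains this as the $n=0$ case of Corollary~\ref{26}(2), which invokes \cite[Lemmas 2.12 and 2.13]{Dou} to pass to the Cohen--Macaulay ring $S$ and then applies Theorem~\ref{25} (together with Proposition~\ref{100} for the reverse inequality). You simply spell out the content of those transfer lemmas by hand rather than citing them, so the argument is the same in substance.
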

Another important result in local cohomology theory is the local-global principle for finiteness dimension, which states that $f_\a(M) = \inf\{f_{\a R_\p}(M_\p) \mid \p\in\spec{R} \}$; see \cite[Theorem 9.6.2]{BS}.
In \cite{ANEW}, Asadollahi and Naghipour defined invariant called the upper $n$-th $\b$-minimum $\a$-adjusted depth of $M$ by 
$$
f_\a^\b(M)^n := \inf\{f_{\a R_\p}^{\b R_\p}(M_\p) \mid \p\in\spec{R} \text{ with } \dim{R/\p} \geq n \}.
$$
By \cite[Proposition 3.4]{DNEW}, $f_\a(M)_n$ and $f_\a(M)^n$ are equal, where $f_\a(M)_n := f_\a^\a(M)_n$ and $f_\a(M)^n:=f_\a^\a(M)^n$.
This is a generalization of the local-global principle for finiteness dimension.
It is natural to ask when $f_\a^\b(M)_n$ equals $f_\a^\b(M)^n$.
In \cite{BRS}, Brodmann, Rotthaus, and Sharp proved that if $R$ is a ring such that $\dim{R} \leq 4$, then $f_\a^\b(M) = \inf\{f_{\a R_\p}^{\b R_\p}(M_\p) \mid \p\in\spec{R} \}$.
We generalize this result and show the following.
\begin{thm}\label{13}
Let $R$ be a ring such that $\dim{R} \leq 4$, and let $M$ be a finitely generated $R$-module.
Then, for each $n\in\mathbb{N}$,
$$
f_\a^\b(M)_n = f_\a^\b(M)^n.
$$
\end{thm}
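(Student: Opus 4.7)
The plan is to prove the two inequalities separately. The easy inequality $f_\a^\b(M)_n \le f_\a^\b(M)^n$ holds without any dimension hypothesis: given $i < f_\a^\b(M)_n$, pick $t$ with $\dim\supp(\b^t H_\a^i(M)) < n$; then no prime $\p$ with $\dim R/\p \ge n$ can lie in $\supp(\b^t H_\a^i(M))$, since otherwise $V(\p) \subseteq \supp(\b^t H_\a^i(M))$ would force that support to have dimension at least $n$. Hence $(\b R_\p)^t H_{\a R_\p}^i(M_\p) = (\b^t H_\a^i(M))_\p = 0$, giving $i < f_{\a R_\p}^{\b R_\p}(M_\p)$. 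Taking the infimum over such $\p$ yields $i < f_\a^\b(M)^n$.

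For the reverse inequality $f_\a^\b(M)_n \ge f_\a^\b(M)^n$, I would adapt Brodmann--Rotthaus--Sharp's strategy \cite{BRS} from the special case $n = 0$. Put $r := f_\a^\b(M)^n$, fix $i < r$, and suppose toward a contradiction that $\dim\supp(\b^t H_\a^i(M)) \ge n$ for every $t \in \mathbb{N}$. The sets $T_t := \supp(\b^t H_\a^i(M))$ form a descending chain, since $\b^{t+1} H_\a^i(M) \subseteq \b^t H_\a^i(M)$, and over the Noetherian ring $R$ one has $T_t = \bigcup_{\p \in \ass(\b^t H_\a^i(M))} V(\p)$ with $\ass(\b^t H_\a^i(M)) \subseteq \ass H_\a^i(M)$.

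The crucial ingredient, and the place where the hypothesis $\dim R \le 4$ is used, is the finiteness of $\ass H_\a^i(M)$; this is the known finiteness theorem for associated primes of local cohomology modules in low dimensions, and it constitutes the main obstacle of the proof. Granting this finiteness, each $T_t$ is a finite union chosen from the fixed finite family $\{V(\p) \mid \p \in \ass H_\a^i(M)\}$, so the descending chain $T_1 \supseteq T_2 \supseteq \cdots$ must stabilize at some $T_\infty$ with $\dim T_\infty \ge n$. Consequently $T_\infty$ contains a prime $\p$ with $\dim R/\p \ge n$. Since $\p \in T_t$ for every $t$, one has $(\b R_\p)^t H_{\a R_\p}^i(M_\p) \ne 0$ for all $t$, forcing $f_{\a R_\p}^{\b R_\p}(M_\p) \le i$; combining this with $\dim R/\p \ge n$ gives $f_\a^\b(M)^n \le f_{\a R_\p}^{\b R_\p}(M_\p) \le i$, contradicting $i < r$. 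This completes the reverse inequality, so $f_\a^\b(M)_n = f_\a^\b(M)^n$.
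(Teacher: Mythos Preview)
Your easy inequality is fine (it is Proposition~\ref{100} in the paper), but the reverse inequality has a genuine gap. You invoke ``the known finiteness theorem for associated primes of local cohomology modules in low dimensions'' to conclude that $\ass H_\a^i(M)$ is finite whenever $\dim R\le 4$. No such theorem exists. Marley's results cover $\dim M\le 2$ (and some related situations), but finiteness of $\ass H_\a^i(M)$ for arbitrary finitely generated $M$ over Noetherian $R$ with $\dim R=4$ is open; indeed, Brodmann--Rotthaus--Sharp wrote \cite{BRS} precisely because a direct finiteness statement was unavailable, and their argument (and this paper's) is designed to sidestep it. If your shortcut worked, it would trivialize their main theorem.

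The paper's route is genuinely different from yours and does not need finiteness of $\ass H_\a^i(M)$. Instead of a global stabilization argument, it proves the equivalence
\[
f_{\a R_\p}^{\b R_\p}(M_\p)>r\text{ for all }\p\in\spec(R)_{\ge n}\quad\Longleftrightarrow\quad f_\a^\b(M)_n>r
\]
separately for each threshold $r$: for $r=1$ (Corollary~\ref{49}) and $r=2$ (Theorem~\ref{410}) one only needs the finiteness of $\ass(H_\a^g(M))_{\ge n}$ at the \emph{lowest nonvanishing} degree $g$, which follows from Proposition~\ref{41} once the lower local cohomologies are in dimension $<n$; for $r\ge\dim R-1$ (Theorem~\ref{413}) one instead uses a Noetherian-induction argument on maximal members of $\Pi_n^\b(M;r)$, showing any such member would be a non-maximal prime of height $\ge\dim R$, a contradiction. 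When $\dim R\le 4$ these ranges overlap to cover every $r\ge 0$, and Corollary~\ref{414} assembles them. To repair your argument you would either have to supply an independent proof that $\ass H_\a^i(M)$ is finite for all $i$ when $\dim R\le 4$ (which would be a substantial new result), or abandon the stabilization approach and follow the level-by-level strategy.
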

Theorem \ref{13} gives rise to the following corollary, which states that if $R$ is a ring such that $\dim{R} \leq 2$, then the assertion of Corollary \ref{12} (1) always holds.
\begin{cor}\label{140}
Let $R$ be a ring such that $\dim{R} \leq 2$, and let $M$ be a finitely generated $R$-module.
Then, for each $n\in\mathbb{N}$,
$$f_\a^\b(M)_n \geq \lambda_\a^\b(M)_n - 1.
$$
\end{cor}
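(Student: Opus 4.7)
The plan is to localize via Theorem \ref{13} and apply Corollary \ref{555} pointwise. Theorem \ref{13} gives
$$f_\a^\b(M)_n = f_\a^\b(M)^n = \inf\{f_{\a R_\p}^{\b R_\p}(M_\p) : \p \in \spec{R},\ \dim{R/\p} \ge n\},$$
and any prime $\p$ in this infimum satisfies $\height{\p} \le \dim{R} - \dim{R/\p} \le 2 - n$.

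When $n \ge 1$, we have $\dim{R_\p} \le 1$, so $R_\p$ is automatically almost Cohen-Macaulay (any local ring of dimension at most $1$ has $\cmd{R_\p} \le 1$). Corollary \ref{555} applied to $R_\p$ then yields $f_{\a R_\p}^{\b R_\p}(M_\p) \ge \lambda_{\a R_\p}^{\b R_\p}(M_\p) - 1 \ge \lambda_\a^\b(M)_n - 1$, and taking the infimum over $\p$ finishes this case.

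The remaining case $n = 0$ is more delicate, since the infimum now includes primes $\p$ of height $2$ (necessarily maximal in $R$), for which $R_\p$ may fail to be almost Cohen-Macaulay. If $\a \not\subseteq \p$, then $\a R_\p = R_\p$ forces $H_{\a R_\p}^i(M_\p) = 0$ for every $i$, so $f_{\a R_\p}^{\b R_\p}(M_\p) = \infty$ and the bound holds trivially. If $\a \subseteq \p$, I would apply Theorem \ref{13} a second time, now to the local ring $R_\p$ (which has dimension $\le 2 \le 4$) with sub-index $1$, obtaining $f_{\a R_\p}^{\b R_\p}(M_\p)_1 = \inf_{\q \subsetneq \p} f_{\a R_\q}^{\b R_\q}(M_\q)$. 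Each such $\q$ has $\dim{R_\q} \le 1$, so $R_\q$ is almost Cohen-Macaulay and Corollary \ref{555} gives $f_{\a R_\p}^{\b R_\p}(M_\p)_1 \ge \lambda_{\a R_\p}^{\b R_\p}(M_\p)_1 - 1$. Under $\a \subseteq \p$, the prime $\q = \p$ contributes nothing to the definition of $\lambda_{\a R_\p}^{\b R_\p}(M_\p)$, so $\lambda_{\a R_\p}^{\b R_\p}(M_\p)_1 = \lambda_{\a R_\p}^{\b R_\p}(M_\p)$.

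The main obstacle will be closing the argument by showing $f_{\a R_\p}^{\b R_\p}(M_\p) = f_{\a R_\p}^{\b R_\p}(M_\p)_1$ in this remaining situation; the general inequality only provides $f_{\a R_\p}^{\b R_\p}(M_\p) \le f_{\a R_\p}^{\b R_\p}(M_\p)_1$. A strict inequality would force, for some index $i$ between the two values, every $(\b R_\p)^t H_{\a R_\p}^i(M_\p)$ to be nonzero yet eventually supported only at the closed point of $\spec{R_\p}$. I expect to rule this out using the Grothendieck composition spectral sequence $H^p_{\p R_\p}(H^q_{\a R_\p}(M_\p)) \Rightarrow H^{p+q}_{\p R_\p}(M_\p)$ together with the already-established sub-index-$1$ bound and standard support/depth estimates on the maximal-ideal-torsion submodule of $H_{\a R_\p}^i(M_\p)$.
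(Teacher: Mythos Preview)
Your $n\ge 1$ argument is valid, but the $n=0$ case has a genuine gap: you reduce to proving $f_{\a R_\p}^{\b R_\p}(M_\p)=f_{\a R_\p}^{\b R_\p}(M_\p)_1$ for a two-dimensional local ring $R_\p$, and you only sketch a spectral-sequence plan for this. That equality is not automatic. Knowing that $\b^{t_0} H_{\a R_\p}^i(M_\p)$ is supported only at the closed point does \emph{not} imply that some higher power $\b^s$ annihilates $H_{\a R_\p}^i(M_\p)$, because the module in question need not be finitely generated. The Grothendieck spectral sequence $H^p_{\p R_\p}(H^q_{\a R_\p}(M_\p))\Rightarrow H^{p+q}_{\p R_\p}(M_\p)$ relates these local cohomologies, but it is unclear how it would convert a support bound into an annihilation statement for powers of $\b$; your outline does not indicate which differentials or edge maps would carry this information.

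More to the point, the whole detour through Corollary~\ref{555} and the equality $f_0=f_1$ is unnecessary. The paper's proof is almost trivial in the finite case: whenever $\lambda_\a^\b(M)_n<\infty$, one has
\[
\lambda_\a^\b(M)_n \le \depth M_\q + \height(\a+\q)/\q \le \height\q + \dim R/\q \le \dim R \le 2
\]
for a suitable prime $\q\notin\V(\b)$, and since $f_\a^\b(M)_n\ge 1$ always, the inequality $f_\a^\b(M)_n\ge\lambda_\a^\b(M)_n-1$ is immediate. The only case requiring work is $\lambda_\a^\b(M)_n=\infty$, where the paper shows (via a short Lemma~\ref{415}) that $\lambda_{\a R_\p}^{\b R_\p}(M_\p)=\infty$ forces $f_{\a R_\p}^{\b R_\p}(M_\p)=\infty$ for each $\p\in\spec{R}_{\ge n}$, and then invokes Theorem~\ref{13} once to conclude $f_\a^\b(M)_n=f_\a^\b(M)^n=\infty$. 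Your machinery for $n\ge 1$ is correct but superfluous, and your $n=0$ argument can be replaced entirely by this two-line observation.
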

The organization of this paper is as follows.
In Section 2, we state our conventions, basic notions, and their properties for later use.
In Section 3, we investigate the relationship between $f_\a^\b(M)_n$ and $\lambda_\a^\b(M)_n$ over an almost Cohen-Macaulay ring, and prove Theorem \ref{11} and Corollaries \ref{555}, \ref{12} and \ref{777}.
In Section 4, we generalize the results in \cite{BRS}, and prove Theorem \ref{13} and Corollary \ref{140}.
\section{Basic definition and properties}
In this section, we give several definitions and their properties.
We begin with our convention.

\begin{conv}
All rings are commutative noetherian rings with identity. Let $R$ be a (commutative noetherian) ring, and let $M$ be an $R$-module. 
The symbol $\mathbb{N}$ denotes the set of non-negative integers, let $n\in\mathbb{N}$. 
\end{conv}

First, we give some notations.
\begin{dfn}
\begin{enumerate}[\rm(1)]
\item For any ideal $\a$ of $R$, we denote by $\V(\a)$ (resp. $\U(\a)$) the set of prime ideals of $R$ containing (resp. contained in) $\a$.
\item If $T$ is an arbitrary subset of $\spec{R}$, then we set
$$
T_{\geq n} := \{\p\in T \mid \dim{R/\p} \geq n\}.
$$
\item We denote by $\syz_R^n M$ the $n$-th syzygy module of a finitely generated $R$-module $M$.
Note that $\syz_R^n M$ is uniquely determined up to projective summands.
\end{enumerate}
\end{dfn}
Next, we recall the definitions of Cohen-Macaulay defect and almost Cohen-Macaulay rings.
\begin{dfn}
\begin{enumerate}[\rm(1)]
    \item The Cohen-Macaulay defect of $R$ is defined by 
    $$
    \cmd{R} := \sup\{\height{\p}-\depth{R_{\p}}\mid\p\in\spec{R}\}.
    $$
    \item A ring $R$ is called an almost Cohen-Macaulay ring if $\cmd{R}\leq1$.
\end{enumerate}
\end{dfn}
We recall the definition of large restricted flat dimension and some results in \cite{GA}.
\begin{dfn}
Let $M$ be a finitely generated $R$-module.
The {\em large restricted flat dimension} of $M$ is defined by 
$$
\Rfd_R M = \sup_{\p \in \spec{R}} \left\{\depth{R_\p} - \depth{M_\p} \right\}.
$$
Note that $\Rfd_{R_\p}{M_\p} = \sup_{\q \in \U(\p)} \{ \depth{R_\q} - \depth{M_\q}\}$ holds for each $\p \in \spec{R}$.
We should remark that the original definition of large restricted flat dimension is different; it is similar to the definition of flat dimension from which its name comes, and the equality in the above definition turns out to hold.
One has $\Rfd_R M \in \mathbb{N} \cup \{-\infty \}$, and $\Rfd_R M = -\infty$ if and only if $M = 0$.
For the details of large restricted flat dimension, we refer the reader to \cite[Theorem 1.1]{AIL} and \cite[Theorem 2.4, 2.11 and Observation 2.10]{CFF}.
\end{dfn}
\begin{prop}\label{225}
\rm{(\cite[Theorem 4.4(8)]{GA})}
Let $M$ be a finitely generated $R$-module.
Assume that $R$ is an almost Cohen-Macaulay ring.
Then, for each $\p\in\spec{R}$, the following are equivalent.
\begin{enumerate}[\rm(1)]
    \item There is an inequality $\Rfd_{R_\p}M_\p \leq 0$.
    \item There exists $s\in R\setminus\p$ such that $sH_I^i(M)=0$ for all ideal $I$ of $R$ and for all $i < \grade(I,R)$.
\end{enumerate}
\end{prop}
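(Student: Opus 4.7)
The plan is to translate both conditions into depth-theoretic statements and then use the almost Cohen-Macaulay hypothesis to link them. By the alternative characterization of $\Rfd$ recorded just before the proposition, condition (1) unravels to the pointwise inequality $\depth R_\q\le\depth M_\q$ for every $\q\in \U(\p)$. Condition (2) is an annihilator statement on local cohomology, which I would rephrase via the support-localization identity $(H_I^i(M))_\q=H_{IR_\q}^i(M_\q)$ and the characterization of depth as the first non-vanishing index for local cohomology.

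For (2)$\Rightarrow$(1), I would fix $\q\in \U(\p)$ and construct an ideal $I\subseteq\q$ whose grade captures $\depth R_\q$: pick a maximal $R_\q$-regular sequence in $\q R_\q$ and pull it back to $R$. The almost Cohen-Macaulay hypothesis $\height\q-\depth R_\q\le 1$ keeps the $R$-grade of the resulting $I$ close to $\depth R_\q$. Applying the annihilator hypothesis to this $I$ and localizing at $\q$ (where $s$ is a unit), the vanishing $H_{IR_\q}^i(M_\q)=0$ for $i<\grade(I,R)$ forces $\depth M_\q\ge\depth R_\q$.

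For (1)$\Rightarrow$(2), the substantive direction, the first step is to verify pointwise vanishing $(H_I^i(M))_\q=0$ for all $\q\in \U(\p)\cap \V(I)$ and $i<\grade(I,R)$, which follows by combining $\depth M_\q\ge\depth R_\q$ with the almost Cohen-Macaulay estimate to transfer the $R$-grade of $I$ to an $M_\q$-depth bound on $IR_\q$. The delicate step is to upgrade this pointwise vanishing to a \emph{uniform} annihilator $s\in R\setminus\p$ that works for every ideal $I$ and every $i<\grade(I,R)$ simultaneously. I would achieve this by reducing to the finitely generated approximations $\Ext_R^i(R/I^n,M)$ via the colimit formula $H_I^i(M)=\varinjlim_n\Ext_R^i(R/I^n,M)$, and invoking the Ext-Tor style characterizations of $\Rfd$ recalled from \cite[Theorem 1.1]{AIL} and \cite[Theorems 2.4, 2.11]{CFF}, which produce an element $s\notin\p$ annihilating all the relevant Ext modules at once.

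The main obstacle is precisely this uniformity of $s$ across all ideals $I$. The almost Cohen-Macaulay hypothesis is indispensable here: the uniform bound $\height\q-\depth R_\q\le 1$ controls, over the whole of $\U(\p)$, how far grade can drop under localization, which is what makes a single annihilator $s$ suffice rather than one depending on $I$. Without this control one obtains only ideal-by-ideal vanishing, not the uniform statement recorded in (2).
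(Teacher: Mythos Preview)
This proposition carries no proof in the present paper: it is imported wholesale from \cite[Theorem~4.4(8)]{GA}, so there is no in-paper argument to compare your sketch against. What follows addresses your outline on its own terms.

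For (2)$\Rightarrow$(1), your plan to manufacture $I\subseteq\q$ by lifting a maximal $R_\q$-regular sequence and then argue that almost Cohen--Macaulayness keeps $\grade(I,R)$ near $\depth R_\q$ is both roundabout and unjustified as written: the global grade of such an $I$ is controlled by \emph{all} primes in $\V(I)$, not only those below $\q$, and you give no mechanism to rule out small-depth primes elsewhere. The direct route is simply $I=\q$. Under the almost Cohen--Macaulay hypothesis one has $\grade(\q,R)=\depth R_\q$: for any $\q'\supsetneq\q$ one has $\height\q'\ge\height\q+1$, whence $\depth R_{\q'}\ge\height\q'-1\ge\height\q\ge\depth R_\q$, so the infimum $\inf\{\depth R_{\q'}:\q'\supseteq\q\}$ is attained at $\q$. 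Then (2) localized at $\q\subseteq\p$ (where $s$ becomes a unit) yields $H_{\q R_\q}^i(M_\q)=0$ for $i<\depth R_\q$, hence $\depth M_\q\ge\depth R_\q$.

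For (1)$\Rightarrow$(2) you correctly isolate the crux---a single $s\in R\setminus\p$ must kill $H_I^i(M)$ for \emph{every} ideal $I$ and every $i<\grade(I,R)$---but the sketch does not supply it. Pointing to the $\Ext$/$\operatorname{Tor}$ descriptions of $\Rfd$ in \cite{AIL} and \cite{CFF} is reasonable orientation, yet those results establish the equivalence of several formulations of $\Rfd$; they do not by themselves manufacture a uniform annihilator across all ideals. Passing to $\varinjlim_n\Ext^i_R(R/I^n,M)$ still leaves infinitely many $I$ to handle, and you have not explained how a single $s$ emerges. That uniformity step is precisely the substantive content of the cited theorem in \cite{GA}, and your proposal names the difficulty without resolving it.
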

\begin{prop}\label{226}
\rm{(\cite[Corollary 4.8]{GA})}
Let $M$ be a finitely generated $R$-module, and let $\p$ be a prime ideal of $R$.
Then there exists an element $s \in R \setminus \p$ such that 
$s H_I^i(M) = 0$ for all ideals $I$ of $R$ and for all $i < \grade(I,R) - \Rfd_{R_\p} M_\p$.
\end{prop}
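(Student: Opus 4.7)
The plan is to reduce to the case $\Rfd_{R_\p}M_\p=0$ via a syzygy shift, and then to produce $s$ by combining finiteness properties of the Ext modules in the direct-limit description of local cohomology with the restricted flat dimension hypothesis.

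Set $d:=\Rfd_{R_\p}M_\p$ and $N:=\syz_R^d M$. Iterating the depth lemma on the short exact sequences $0\to\syz_R^k M\to F_{k-1}\to\syz_R^{k-1}M\to 0$ (with $F_{k-1}$ free) gives
$$
\depth_{R_\q}N_\q\ \ge\ \min\bigl\{\depth_{R_\q}M_\q+d,\ \depth R_\q\bigr\}
$$
at every $\q\in\spec R$. Restricting to $\q\in\U(\p)$ and using $\depth R_\q-\depth M_\q\le d$ yields $\Rfd_{R_\p}N_\p\le 0$. The associated long exact sequences, together with $H_I^j(F_{k-1})=0$ for $j<\grade(I,R)$, produce injections $H_I^i(M)\hookrightarrow H_I^{i+d}(N)$ for every ideal $I$ and every $i<\grade(I,R)-d$. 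It therefore suffices to find $s\in R\setminus\p$ with $sH_I^j(N)=0$ for all ideals $I$ and all $j<\grade(I,R)$. For this base case, $\Rfd_{R_\p}N_\p\le 0$ gives $\depth N_\q\ge\depth R_\q$ on $\U(\p)$, hence $\grade(IR_\q,N_\q)\ge\grade(IR_\q,R_\q)\ge\grade(I,R)$, and so $H_I^j(N)_\q=H_{IR_\q}^j(N_\q)=0$ at each $\q\in\U(\p)$ and each $j<\grade(I,R)$. The remaining task is to promote this localized vanishing to a global annihilator $s\in R\setminus\p$ working uniformly in all ideals $I$.

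The delicate point is precisely this last step. The naive attempt---taking $s$ in the defining ideal of the closure of the depth-defect locus $W:=\{\q\in\spec R:\depth N_\q<\depth R_\q\}$---fails in general, because $\overline{W}$ may contain $\p$ even when $W\cap\U(\p)=\emptyset$ (for instance when $\p$ is the generic point of an irreducible component of $\overline{W}$ that carries infinitely many embedded primes of $W$, as happens in $k[x,y]$ with $\p=(x)$ and $W$ consisting of the height-two primes $(x,y-a)$ for $a\ne 0$). A workable construction of $s$ should instead exploit the finite generation of each Ext module $\Ext_R^j(R/I^n,N)$ in the presentation $H_I^j(N)=\varinjlim_n\Ext_R^j(R/I^n,N)$, together with a uniform Artin--Rees-type control on the annihilators of these Ext modules that is independent of $I$ and $n$. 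Securing this uniformity over the entire family of ideals $I$ is the technical heart of the argument and the main obstacle.
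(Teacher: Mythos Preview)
The paper does not prove this proposition; it is quoted as \cite[Corollary 4.8]{GA} without argument, so there is no in-paper proof to compare your attempt against.

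As for your proposal on its own merits: it is not a complete proof, and you acknowledge this. The syzygy reduction to $N=\syz_R^d M$ with $\Rfd_{R_\p}N_\p\le 0$ is correct, as is the pointwise vanishing $H_I^j(N)_\q=0$ for every $\q\in\U(\p)$ and every $j<\grade(I,R)$. But passing from this pointwise vanishing on $\U(\p)$ to a \emph{single} $s\in R\setminus\p$ that kills $H_I^j(N)$ globally and uniformly over all ideals $I$ is exactly the content of the statement in the base case $d=0$, and you have not supplied that step. Your final paragraph correctly diagnoses why the closure-of-$W$ idea can fail and names a uniform Artin--Rees mechanism as a candidate, but no argument is carried out. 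So the proposal reduces the proposition to its own $d=0$ special case and then stops at the hard part.

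For orientation: the standard route past this obstacle is not to control $\Ext^j(R/I^n,N)$ uniformly over all $I$ and $n$ via Artin--Rees---that is indeed intractable as stated---but rather to replace the infinite family of conditions ``$sH_I^j(N)=0$ for all $I$ and $j<\grade(I,R)$'' by annihilation of finitely many fixed finitely generated modules, typically the $\Ext_R^j(\tr N,R)$ in an appropriate range (Auslander--Bridger theory). The hypothesis $\Rfd_{R_\p}N_\p\le 0$ forces these Ext modules to vanish after localizing at $\p$, and then any $s$ in their (finitely generated) annihilator lying outside $\p$ does the job for all $I$ simultaneously. This is the mechanism your proposal is missing.
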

Next, we recall the definition of modules in dimension $<n$, which is introduced by Asadollahi and Naghipour \cite{AN}.
\begin{dfn}
An $R$-module $M$ is said to be in dimension $<n$, if there is a finitely generated submodule $N$ of $M$ such that $\dim{\supp(M/N)} < n$.
Note that if $M$ is in dimension $<n$, then $M_{\p}$ is finitely generated for all $\p\in\spec{R}_{\geq n}$.
\end{dfn}

Finally, we recall some invariants, which are needed to state our results.
The invariants (1) and (2) were introduced by Brodmann and Sharp in \cite{BS}, (3) and (4) by Doustimehr and Naghipour in \cite{Dou}, and (5) by Asadollahi and Naghipour in \cite{ANEW}.
\begin{dfn}
Let $\a$ and $\b$ be ideals of $R$ such that $\b \subseteq \a$, and let $M$ be in dimension $< n$.
\begin{enumerate}[\rm(1)]
    \item The $\b$-finiteness dimension of $M$ relative to $\a$ is defined by 
    $$
    f_\a^\b(M) := \inf\{i \in \mathbb{N} \mid \b^t H_\a^i(M) \neq 0 \text{ for all } t \in \mathbb{N}\}.
    $$
    \item The $\b$-minimum $\a$-adjusted depth of $M$ is defined by 
    $$
    \lambda_\a^\b(M) := \inf\{\depth M_\p + \height(\a + \p)/\p \mid \p \in \spec{R}\setminus\V(\b) \}.
    $$
    \item The $n$-th $\b$-finiteness dimension of $M$ relative to $\a$ is defined by 
    $$
    f_\a^\b(M)_n := \inf\{i \in \mathbb{N} \mid \dim{\supp(\b^t H_\a^i(M))} \geq n \text{ for all } t \in \mathbb{N}\}.
    $$
    \item The $n$-th $\b$-minimum $\a$-adjusted depth of $M$ is defined by 
    $$
    \lambda_\a^\b(M)_n := \inf\{\lambda_{\a R_\p}^{\b R_\p}(M_\p) \mid \p\in\spec{R}_{\geq n} \}.
    $$
    \item The upper $n$-th $\b$-minimum $\a$-adjusted depth of $M$ is defined by 
    $$
    f_\a^\b(M)^n := \inf\{f_{\a R_\p}^{\b R_\p}(M_\p) \mid \p\in\spec{R}_{\geq n} \}.
    $$
\end{enumerate}
Note that all the invariants defined above are positive integers or $\infty$ and that $f_\a^\b(M)_0 = f_\a^\b(M)$ and $\lambda_\a^\b(M)_0 = \lambda_\a^\b(M)$.
\end{dfn}

\begin{prop}\label{100}
\rm(\cite[Lemma 2.1]{ANEW}).
Let $\a$ and $\b$ be ideals such that $\b \subseteq \a$, and let $M$ be a finitely generated $R$-module.
Then,
$$
f_\a^\b(M)_n \leq f_\a^\b(M)^n \leq \lambda_\a^\b(M)_n.
$$
\end{prop}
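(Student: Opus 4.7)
The plan is to establish the two inequalities separately: $f_\a^\b(M)_n \leq f_\a^\b(M)^n$ by a direct support-and-localization argument, and $f_\a^\b(M)^n \leq \lambda_\a^\b(M)_n$ by reducing via localization to the classical bound $f_\a^\b(N) \leq \lambda_\a^\b(N)$ valid for any finitely generated module $N$ over any noetherian ring.

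For the first inequality, I would fix an arbitrary $i < f_\a^\b(M)_n$ and extract from the definition a $t \in \mathbb{N}$ with $\dim\supp(\b^t H_\a^i(M)) < n$. Then no $\p \in \spec{R}_{\geq n}$ can lie in this support, so the identification $(\b^t H_\a^i(M))_\p \cong (\b R_\p)^t H_{\a R_\p}^i(M_\p)$ forces the right-hand side to vanish for every such $\p$. This gives $i < f_{\a R_\p}^{\b R_\p}(M_\p)$ for each $\p \in \spec{R}_{\geq n}$, and taking the infimum over these primes yields $i < f_\a^\b(M)^n$, as required.

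For the second inequality, I would observe that taking the infimum over $\p \in \spec{R}_{\geq n}$ of the pointwise bound $f_{\a R_\p}^{\b R_\p}(M_\p) \leq \lambda_{\a R_\p}^{\b R_\p}(M_\p)$ immediately recovers the stated inequality. Each such pointwise bound is just the classical inequality $f_\a^\b(N) \leq \lambda_\a^\b(N)$ applied to the finitely generated $R_\p$-module $M_\p$ and the ideals $\b R_\p \subseteq \a R_\p$ of the noetherian local ring $R_\p$, so no extra work is required beyond invoking this fact.

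The main obstacle, and the only substantive ingredient, is the classical bound $f_\a^\b(N) \leq \lambda_\a^\b(N)$, which I would invoke as a black box from \cite{BS}. Its standard proof goes by induction on $\lambda_\a^\b(N)$ and combines the Mayer--Vietoris sequence with the basic vanishing $H_\a^i(N) = 0$ for $i < \grade(\a, N)$; crucially, it holds with no hypothesis on $R$. Once it is in place, both halves of the proposition reduce to the routine support-theoretic manipulations under localization sketched above.
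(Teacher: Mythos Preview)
The paper does not supply its own proof of this proposition; it is stated with a citation to \cite[Lemma 2.1]{ANEW} and used as a black box thereafter. Your sketch is the standard argument and is essentially correct.

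One small logical point in the first inequality: from $(\b R_\p)^t H_{\a R_\p}^i(M_\p)=0$ you only learn that $i$ does not belong to the set whose infimum defines $f_{\a R_\p}^{\b R_\p}(M_\p)$; this by itself does not yield $i<f_{\a R_\p}^{\b R_\p}(M_\p)$, since some smaller $j$ could still lie in that set. However, since your extraction of $t$ works for \emph{every} $j<f_\a^\b(M)_n$, applying it to each $j\le i$ shows that no such $j$ lies in the defining set, and then $f_{\a R_\p}^{\b R_\p}(M_\p)>i$ genuinely follows. With that adjustment the first half is complete. The second half is exactly as you say: the pointwise bound $f_{\a R_\p}^{\b R_\p}(M_\p)\le\lambda_{\a R_\p}^{\b R_\p}(M_\p)$ is the classical inequality (e.g.\ \cite[Theorem 9.5.1]{BS}), valid over any noetherian ring, and taking the infimum over $\p\in\spec{R}_{\ge n}$ finishes.
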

\begin{rem}\label{14}
Let $\a$ and $\b$ be ideals of $R$ such that $\b \subseteq \a$, and let $M$ be in dimension $< n$.
\begin{enumerate}[\rm(1)]
    \item By \cite[Theorem 4.2.1 and Proposition 8.1.2]{BS}, there exists the exact sequence 
    $$
    \cdots\to(H_{\a}^{i-1}(M))_s \to H_{\a + Rs}^i(M) \to H_{\a}^i(M)\to\cdots.
    $$
    From this, we see that $f_{\a}^{\b}(M)_n \leq f_{\a + Rs}^{\b}(M)_n$. Hence, by induction, we obtain $f_{\a}^{\b}(M)_n \leq f_{\a'}^{\b}(M)_n$ for all $\a'\in\V(\a)$.
    Also, it is easy to see that $\lambda_{\a}^{\b}(M)_n \leq \lambda_{\a'}^{\b}(M)_n$ for all $\a'\in\V(\a)$.
    \item Since $M$ is in dimension $< n$, there exists a finitely generated submodule $N$ of $M$ such that $\dim{\supp(M/N)} < n$.
    Then it is easy to see that $f_\a^\b(M)^n = f_\a^\b(N)^n$ and $f_\a^\b(M)_n = f_\a^\b(N)_n$.
    Since $N$ is finitely generated, there exists $t\in\mathbb{N}$ such that $\b^t \Gamma_{\b}(N) = 0$.
    Hence, from the exact sequence
    $$
    \cdots \to H_\a^i(\Gamma_{\b}(N)) \to H_\a^i(N) \to H_\a^i(N/{\Gamma_{\b}(N)}) \to \cdots,
    $$
    we see that $f_\a^\b(N)^n = f_\a^\b(N/{\Gamma_{\b}(N)})^n$ and $f_\a^\b(N)_n = f_\a^\b(N/{\Gamma_{\b}(N)})_n$.
\end{enumerate}
\end{rem}
\section{Relationship between $f_\a^\b(M)_n$ and $\lambda_\a^\b(M)_n$ over an almost Cohen-Macaulay ring}
In this section, we investigate the relationship between $f_\a^\b(M)_n$ and $\lambda_\a^\b(M)_n$ over an almost Cohen-Macaulay ring, and prove Theorem \ref{11} and Corollaries \ref{555}, \ref{12} and \ref{777}.
The discussion in this section is based on the one developed in \cite{BS}.
First, we introduce several notations.
\begin{dfn}
\rm{(cf. \cite[9.4.8]{BS})}
Let $M$ be a finitely generated $R$-module.
For each $t\in\mathbb{N}\cup\{-\infty\}$, let
$$
\U_t(M):= \{\p \in \spec{R} \mid \Rfd_{R_\p}M_\p \leq t\},
$$
let $\C_t(M) = \spec{R}\setminus\U_t(M)$, and let $\c_t(M) := \bigcap_{\p \in \C_t(M)}\p$.
\end{dfn}
We begin with the following proposition.
\begin{prop}\label{21}
\rm{(cf. \cite[Corollary 9.4.7]{BS})}
Let $R$ be an almost Cohen-Macaulay ring.
Then for each $t\in\mathbb{N}\cup\{-\infty\}$, the set $\U_t(M)$ is an open subset of $\spec{R}$ (in the Zariski topology) for all finitely generated $R$-modules $M$.
\end{prop}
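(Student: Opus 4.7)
The plan is to reduce to the case $t = 0$ via syzygies and then invoke Proposition~\ref{225}. The case $t = -\infty$ is immediate, since $\Rfd_{R_\p} M_\p = -\infty$ precisely when $M_\p = 0$; hence $\U_{-\infty}(M) = \spec R \setminus \supp M$, which is open because $M$ is finitely generated.

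For $t \in \mathbb{N}$, I first establish that $\U_t(M) = \U_0(\syz^t M)$ for every finitely generated $R$-module $M$. From a short exact sequence $0 \to \syz M \to F \to M \to 0$ with $F$ finitely generated and free, the depth lemma applied over each localization $R_\q$ gives
$$
\Rfd_{R_\p} \syz M_\p \leq \max\{0, \Rfd_{R_\p} M_\p - 1\} \quad \text{and} \quad \Rfd_{R_\p} M_\p \leq \max\{0, \Rfd_{R_\p} \syz M_\p + 1\}
$$
for every $\p \in \spec R$. By induction on $t$ these two inequalities produce the equivalence $\Rfd_{R_\p} M_\p \leq t \iff \Rfd_{R_\p} \syz^t M_\p \leq 0$, where I use that $(\syz^t M)_\p$ and $\syz_{R_\p}^t M_\p$ differ only by a projective $R_\p$-summand, which contributes $0$ to the large restricted flat dimension. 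This yields the set-theoretic equality $\U_t(M) = \U_0(\syz^t M)$, so it suffices to prove $\U_0(N)$ is open for every finitely generated $R$-module $N$.

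To this end, set
$$
S := \{s \in R \mid s H_I^i(N) = 0 \text{ for every ideal } I \subseteq R \text{ and every } i < \grade(I, R)\}.
$$
A direct verification shows that $S$ is an ideal of $R$ (it is closed under addition and under multiplication by arbitrary elements of $R$). By Proposition~\ref{225}, which is precisely where the almost Cohen-Macaulay hypothesis is used, a prime $\p$ lies in $\U_0(N)$ if and only if there exists $s \in R \setminus \p$ with $s \in S$, i.e., if and only if $S \not\subseteq \p$. Therefore
$$
\U_0(N) = \spec R \setminus \V(S),
$$
which is open in the Zariski topology.

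The main subtlety lies in the syzygy reduction: one has to run the depth-lemma estimates carefully in both directions and check that swapping $(\syz^t M)_\p$ for $\syz_{R_\p}^t M_\p$ preserves the condition $\Rfd \leq 0$. With that reduction in hand, the almost Cohen-Macaulay hypothesis enters only through Proposition~\ref{225}, and openness of $\U_0(N)$ follows from describing it as the complement of the vanishing locus of an ideal.
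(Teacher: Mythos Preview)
Your proof is correct and follows essentially the same approach as the paper: reduce the case $t=-\infty$ to the support, reduce $t\in\mathbb{N}$ to $t=0$ via the depth lemma and syzygies, and then apply Proposition~\ref{225} in both directions to identify $\U_0(N)$ as an open set. The only cosmetic difference is that you package the final step as $\U_0(N)=\spec R\setminus\V(S)$ for the annihilating ideal $S$, whereas the paper exhibits an open neighborhood of each $\p\in\U_0(M)$ directly.
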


\begin{proof}
Let $M$ be a finitely generated $R$-module.
If $t=-\infty$, then $\U_t(M)=\spec{R}\setminus\supp(M)$.
Hence $\U_t(M)$ is open.
Suppose that $t\geq0$.
Then the depth lemma \cite[Proposition 1.2.9]{BH} yields $\Rfd_{R_\p}(\syz_R^t M)_\p = \sup\{\Rfd_{R_\p}M_\p - t, 0\}$, and hence we have $\U_t(M)=\U_0(\syz_R^t M)$.
Thus, it is enough to show that $\U_0(M)$ is open.
Let $\p\in\U_0(M)$.
It follows from Proposition \ref{225} that there exists $s\in R\setminus\p$ such that $sH_I^i(M)=0$ for all ideals $I \subseteq R$ and all integers $i < \grade(I,R)$.
Again using Proposition \ref{225}, for each $\q\in\spec{R}$ with $s\notin\q$, we get $\Rfd_{R_\q}(M_\q) \leq 0$, and hence $\q\in\U_0(M)$.
Thus $\U_0(M)$ is open.
\end{proof}
Using the above proposition, we can prove the following corollary.
\begin{cor}\label{22}
\rm{(cf. \cite[Exrcise 9.4.9]{BS})}
Let $R$ be an almost Cohen-Macaulay ring and let $M$ be a finitely generated $R$-module, and let $S$ be a multiplicatively closed set of $R$.
Then $S^{-1}\c_t(M) = \c_t(S^{-1}M)$ for all $t\in\mathbb{N}\cup\{-\infty\}$.
\end{cor}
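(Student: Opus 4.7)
The plan is to reduce the equality to a statement about radicals of ideals, exploiting the fact that Proposition \ref{21} makes $\C_t(M)$ a \emph{closed} subset of $\spec R$.

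First, by Proposition \ref{21}, $\U_t(M)$ is open, so $\C_t(M)$ is closed in $\spec R$. Hence there exists an ideal $I \subseteq R$ such that $\C_t(M) = \V(I)$, and consequently
$$
\c_t(M) \;=\; \bigcap_{\p \in \C_t(M)} \p \;=\; \bigcap_{\p \in \V(I)} \p \;=\; \sqrt{I}.
$$
This reduces the problem to verifying that the closed subset corresponding to $\c_t(S^{-1}M)$ on $\spec(S^{-1}R)$ is precisely the image of $\V(I)$ under the canonical identification $\spec(S^{-1}R) \leftrightarrow \{\p \in \spec R \mid \p \cap S = \emptyset\}$.

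Next I would establish that under this identification, $\C_t(S^{-1}M)$ corresponds exactly to $\{\p \in \C_t(M) \mid \p \cap S = \emptyset\}$. This follows from the canonical isomorphism $(S^{-1}R)_{S^{-1}\p} \cong R_\p$ and $(S^{-1}M)_{S^{-1}\p} \cong M_\p$ for any $\p \in \spec R$ with $\p \cap S = \emptyset$, which gives the equality $\Rfd_{(S^{-1}R)_{S^{-1}\p}}(S^{-1}M)_{S^{-1}\p} = \Rfd_{R_\p} M_\p$. In particular, the closed subset $\C_t(S^{-1}M) \subseteq \spec(S^{-1}R)$ equals $\V(S^{-1}I)$.

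Finally I would combine these observations with the fact that localization commutes with taking radicals of ideals:
$$
\c_t(S^{-1}M) \;=\; \bigcap_{\q \in \V(S^{-1}I)} \q \;=\; \sqrt{S^{-1}I} \;=\; S^{-1}\sqrt{I} \;=\; S^{-1}\c_t(M).
$$
The main conceptual input is the openness of $\U_t(M)$ given by Proposition \ref{21} (which is where the almost Cohen-Macaulay hypothesis enters); beyond that, the argument is a formal computation with radicals and flat base change. No serious obstacle is expected, since the compatibility of $\Rfd$ with further localization is already built into its definition as noted just after the definition of large restricted flat dimension.
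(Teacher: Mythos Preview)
Your proof is correct and follows essentially the same approach as the paper: both use Proposition \ref{21} to conclude that $\C_t(M)$ is closed, then reduce the claim to a formal compatibility of localization with radicals (equivalently, with finite intersections of the minimal primes of $\C_t(M)$). Your version is in fact slightly more explicit than the paper's in justifying why $\C_t(S^{-1}M)$ is the preimage of $\C_t(M)$ under $\spec(S^{-1}R)\to\spec R$, a step the paper leaves implicit.
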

\begin{proof}
By Proposition \ref{21}, the set $\C_t(M)$ is a closed subset of $\spec{R}$.
Hence, we find $\p_1, \cdots ,\p_n \in \C_t(M)$ such that $\c_t(M) = \p_1\cap\cdots\cap\p_n$.
Then, since the localization commutes with the formation of finite intersections, we obtain $S^{-1}\c_t(M) = \c_t(S^{-1}M)$.
\end{proof}
The following two results play an important role in the proof of Theorem \ref{25}.
\begin{prop}\label{23}
\rm{(cf. \cite[Proposition 9.4.10]{BS})}
Let $t\in\mathbb{N}\cup\{-\infty\}$, and let $M$ be a finitely generated $R$-module.
Then there exists $k\in\mathbb{N}$ such that $\c_{t}(M)^k H_I^i(M) = 0$ for all ideals $I$ of $R$ and all integers $i<\grade(I,R) - t$.
\end{prop}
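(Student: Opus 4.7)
The plan is to mimic the standard annihilator argument and encode the desired conclusion directly into a single ideal. Let
\[
B = \{\,s \in R \mid sH_I^i(M) = 0 \text{ for every ideal } I \text{ of } R \text{ and every } i < \grade(I,R) - t\,\};
\]
this is clearly an ideal of $R$, and the conclusion of the proposition is equivalent to asking that $\c_t(M)^k \subseteq B$ for some $k \in \mathbb{N}$. Since $R$ is noetherian and $\c_t(M)$ is finitely generated, it suffices to prove the radical containment $\c_t(M) \subseteq \sqrt{B}$.

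To establish this, I would show that $\V(B) \subseteq \C_t(M)$, equivalently that $B \not\subseteq \p$ for every $\p \in \U_t(M)$. Fix such a $\p$, so $\Rfd_{R_\p} M_\p \leq t$. Proposition \ref{226} supplies an element $s \in R \setminus \p$ with $sH_I^i(M) = 0$ for all ideals $I$ and all $i < \grade(I,R) - \Rfd_{R_\p} M_\p$. The inequality $\Rfd_{R_\p} M_\p \leq t$ yields $\grade(I,R) - t \leq \grade(I,R) - \Rfd_{R_\p} M_\p$, so this annihilation holds in particular for every $i < \grade(I,R) - t$; consequently $s \in B \setminus \p$, as required. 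The boundary case $t = -\infty$ is absorbed by the convention $\grade(I,R) - (-\infty) = \infty$: for $\p \in \U_{-\infty}(M) = \spec{R} \setminus \supp{M}$ one has $\Rfd_{R_\p} M_\p = -\infty$, and Proposition \ref{226} still supplies an $s \in R \setminus \p$ annihilating every $H_I^i(M)$.

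Intersecting over the primes of $\V(B) \subseteq \C_t(M)$ gives
\[
\sqrt{B} \;=\; \bigcap_{\p \in \V(B)} \p \;\supseteq\; \bigcap_{\p \in \C_t(M)} \p \;=\; \c_t(M),
\]
so noetherianity provides a $k$ with $\c_t(M)^k \subseteq B$, which is the desired conclusion. The main (essentially the only) obstacle is the bookkeeping in the chain of inequalities above, ensuring that the $s$ extracted pointwise from Proposition \ref{226} genuinely lies in the uniformly defined ideal $B$. I note that this argument does not invoke the almost Cohen-Macaulay hypothesis explicitly; that assumption enters the surrounding section only through the openness statement of Proposition \ref{21} and the localization statement of Corollary \ref{22}, but is not needed here.
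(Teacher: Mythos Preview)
Your proof is correct and follows essentially the same route as the paper's. The paper builds the ideal $\g = \sum_{\p \in \U_t(M)} s_\p R$ from the elements supplied by Proposition~\ref{226}, shows $\V(\g) \subseteq \C_t(M)$, and concludes $\c_t(M)^k \subseteq \g$; your ideal $B$ simply contains this $\g$, and the verification that $\V(B) \subseteq \C_t(M)$ is the identical pointwise argument.
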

\begin{proof}
Let $\p\in\U_t(M)$.
Then we have $\Rfd_{R_\p}{M_\p} \leq t$.
Hence, it follows from Proposition \ref{226} that there exists $s_{\p}\in R\setminus\p$ such that $s_{\p}H_I^i(M)=0$ for all ideals $I \subseteq R$ and all integers $i < \grade(I,R) - t$.
Put $\g = \sum_{p\in\U_t(M)}s_{\p}R$.
Then $\g H_I^i(M)=0$ for all ideals $I \subseteq R$ and all integers $i < \grade(I,R) - t$.
Since $s_{\p}\in \g$ and $s_{\p}\notin \p$ for each $\p\in\U_t(M)$, we have $\V(\g) \subseteq \C_t(M)$.
Hence we get $\sqrt{\g} \subseteq \c_t(M)$.
Thus, there exists $k\in\mathbb{N}$ such that $\c_t(M)^k \subseteq \g$, and the assertion follows from this.
\end{proof}

\begin{lem}\label{24}
\rm{(cf. \cite[Lemma 9.4.14]{BS})}
Let $\q\in\V(\a)_{\geq n}$, and let $M$ be a finitely generated $R$-module.
Then we have $\b R_{\q} \subseteq \c_{\height\q - \lambda_\a^\b(M)_n}(M_\q)$.
\end{lem}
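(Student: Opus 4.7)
The plan is to prove the containment by a direct contrapositive argument, unwinding the definitions of $\c_t$ and $\Rfd$. Set $t := \height\q - \lambda_\a^\b(M)_n$. Since primes of $R_\q$ correspond to primes $\p' \subseteq \q$ of $R$ via $\p' \mapsto \p' R_\q$, with $(R_\q)_{\p' R_\q} = R_{\p'}$ and $(M_\q)_{\p' R_\q} = M_{\p'}$, the set $\C_t(M_\q)$ is precisely $\{\p' R_\q : \p' \subseteq \q,\ \Rfd_{R_{\p'}} M_{\p'} > t\}$. Hence the target containment $\b R_\q \subseteq \c_t(M_\q) = \bigcap_{\p' R_\q \in \C_t(M_\q)} \p' R_\q$ is equivalent to the implication: if $\p' \subseteq \q$ and $\b \not\subseteq \p'$, then $\Rfd_{R_{\p'}} M_{\p'} \leq t$.

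To prove this implication, fix $\p' \subseteq \q$ with $\b \not\subseteq \p'$, and invoke the formula $\Rfd_{R_{\p'}} M_{\p'} = \sup_{\p'' \in \U(\p')} (\depth R_{\p''} - \depth M_{\p''})$ recalled in Section~2. It then suffices to bound $\depth R_{\p''} - \depth M_{\p''} \leq t$ for every $\p'' \subseteq \p'$. For any such $\p''$ we still have $\b \not\subseteq \p''$, so $\p'' R_\q$ lies in $\spec R_\q \setminus \V(\b R_\q)$ and is admissible in the infimum defining $\lambda_{\a R_\q}^{\b R_\q}(M_\q)$; plugging it in and combining with the trivial inequality $\lambda_\a^\b(M)_n \leq \lambda_{\a R_\q}^{\b R_\q}(M_\q)$ (valid because $\q \in \V(\a)_{\geq n}$) yields
$$
\lambda_\a^\b(M)_n \leq \depth M_{\p''} + \height\bigl((\a + \p'')R_\q / \p'' R_\q\bigr).
$$
After rearranging, the remaining inequality to establish is
$$
\depth R_{\p''} + \height\bigl((\a + \p'')R_\q / \p'' R_\q\bigr) \leq \height\q.
$$

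The main obstacle, and essentially the only nontrivial check, is this last inequality, which I expect to reduce to a routine dimension count inside the local ring $R_\q$: one has $\depth R_{\p''} \leq \height\p''$ as always; since $\a \subseteq \q$, the ideal $(\a + \p'')R_\q/\p'' R_\q$ is proper in the local domain $R_\q/\p'' R_\q$ of dimension $\height\q/\p''$, hence has height at most $\height\q/\p''$; and $\height\p'' + \height\q/\p'' \leq \height\q$ holds in any Noetherian local ring by concatenating chains of primes. The main care required throughout is to keep track of the two layers of localization $R \to R_\q \to R_{\p'}$ and to use $\a \subseteq \q$ at the right moment so that the height computations take place in the intended ambient ring. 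Notably, no almost-Cohen-Macaulay hypothesis enters this particular lemma.
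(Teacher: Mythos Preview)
Your proof is correct and follows essentially the same route as the paper's: both fix a prime $\p'$ (the paper's $\p$) not containing $\b$, pass to an arbitrary $\p'' \subseteq \p'$ (the paper's $\s$), and bound $\depth R_{\p''} - \depth M_{\p''}$ by $t$ via the chain $\lambda_\a^\b(M)_n \le \lambda_{\a R_\q}^{\b R_\q}(M_\q) \le \depth M_{\p''} + \height((\a+\p'')R_\q/\p'' R_\q)$ together with $\depth R_{\p''} + \height(\q R_\q/\p'' R_\q) \le \height\q$. The only cosmetic difference is that you argue directly (contrapositively) while the paper phrases it as a contradiction, and you spell out the dimension count $\height\p'' + \height(\q/\p'') \le \height\q$ that the paper compresses into a single displayed inequality.
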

\begin{proof}
Put $t = \height\q - \lambda_\a^\b(M)_n$.
Suppose on the contrary, that there exists $\p R_\q \in \spec(R_\q)\setminus\U_t(M_\q)$ with $\b R_\q \nsubseteq \p R_\q$, and we look for a contradiction.
For $\s\in\U(\p)$, we have
$$
\lambda_\a^\b(M)_n\leq \lambda_{\a R_\q}^{\b R_\q}(M_\q) \leq  \depth(M_\q)_{\s R_\q} + \height{(\s R_\q + \q R_\q)/{\s R_\q}} \leq \height{\q} - (\depth{R_\s} - \depth{M_\s}).
$$
Hence we get $\depth{R_\s} - \depth{M_\s} \leq t$.
Thus, we obtain 
$$
\Rfd_{(R_\q)_{\p R_\q}}(M_\q)_{\p R_\q}=\Rfd_{R_\p}M_\p=\inf_{\s\in\U(\p)}\{\depth{R_\s} - \depth{M_\s} \}\leq t,
$$
and this inequality contradicts the fact that $\p R_\q \notin\U_t(M_\q)$.
\end{proof}
Now we can state and prove Theorem \ref{11}, which is the main result of this paper.
\begin{thm}\label{25}
Assume that $R$ is an almost Cohen-Macaulay ring.
Let $\a$ and $\b$ be ideals of $R$ such that $\b \subseteq \a$, and let $M$ be in dimension $< n$.
Then, 
$$
f_\a^\b(M)_n \geq \lambda_\a^\b(M)_n - \cmd{R}.
$$
\end{thm}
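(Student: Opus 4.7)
The plan is to adapt the Brodmann--Sharp proof of Faltings' annihilator theorem (\cite[Theorem~9.4.16]{BS}) to the almost Cohen--Macaulay setting, substituting the large restricted flat dimension machinery of Propositions~\ref{225}--\ref{23} and Lemma~\ref{24} for the finite injective dimension input used over Gorenstein rings. By Remark~\ref{14}(2) I may replace $M$ by a finitely generated submodule without changing $f_\a^\b(M)_n$ or $\lambda_\a^\b(M)_n$. Set $p = \lambda_\a^\b(M)_n$ and $c = \cmd R$, and fix an integer $i$ with $i < p - c$. The task is to produce $t \in \mathbb{N}$ such that $\dim\supp(\b^t H_\a^i(M)) < n$, equivalently $(\b^t H_\a^i(M))_\q = 0$ for every $\q \in \spec R$ with $\dim R/\q \ge n$; since $H_\a^i(M)$ is $\a$-torsion, only primes in $\V(\a)_{\ge n}$ need attention.

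For each such $\q$, Lemma~\ref{24} gives $\b R_\q \subseteq \c_{\height\q - p}(M_\q)$, and Proposition~\ref{23} applied in the almost Cohen--Macaulay local ring $R_\q$ with parameter $\height\q - p$ produces $k_\q$ with $\c_{\height\q - p}(M_\q)^{k_\q} H_I^j(M_\q) = 0$ for every ideal $I \subseteq R_\q$ and every $j < \grade(I, R_\q) - \height\q + p$. Applying this with $I = \q R_\q$, whose grade equals $\depth R_\q \ge \height\q - c$ since $\cmd R_\q \le c$, places $j = i$ safely inside the admissible range and yields $\b^{k_\q} H_{\q R_\q}^i(M_\q) = 0$. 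When $\q$ is minimal over $\a$ one has $\sqrt{\a R_\q} = \q R_\q$, hence $H_{\q R_\q}^i(M_\q) = H_{\a R_\q}^i(M_\q) = H_\a^i(M)_\q$, and the conclusion follows at $\q$. Uniformity of the exponent over $\q$ is provided by Corollary~\ref{22}, which identifies $\c_{\height\q - p}(M_\q)$ with $\c_{\height\q - p}(M) R_\q$, together with Proposition~\ref{21}, which makes $\C_t(M)$ a finite union of irreducible closed subsets; these together let one extract a single $t$ independent of $\q$.

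The main obstacle, which I expect to be the most technical step, is treating $\q \in \V(\a)_{\ge n}$ that are not minimal over $\a$: then $\sqrt{\a R_\q} \subsetneq \q R_\q$ and the equality $H_{\q R_\q}^i(M_\q) = H_{\a R_\q}^i(M_\q)$ fails. Here the plan is to invoke the Grothendieck composition spectral sequence
\[
E_2^{p,r} = H_{\q R_\q}^p\bigl(H_{\a R_\q}^r(M_\q)\bigr) \;\Longrightarrow\; H_{\q R_\q}^{p+r}(M_\q),
\]
which converges because $\a R_\q \subseteq \q R_\q$ and $\Gamma_{\a R_\q}$ sends injectives to $\Gamma_{\q R_\q}$-acyclics, and then to combine the $\b^{k_\q}$-annihilation of the abutment with the $\a$-torsion of each $H_{\a R_\q}^r(M_\q)$ in order to propagate annihilation back to $H_{\a R_\q}^i(M_\q) = H_\a^i(M)_\q$. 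This spectral-sequence bookkeeping, together with uniformising the resulting exponent over all $\q \in \V(\a)_{\ge n}$, is where the bulk of the technical work lies.
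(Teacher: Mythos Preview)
Your reduction to finitely generated $M$ and your treatment of the primes $\q\in\V(\a)_{\ge n}$ that are minimal over $\a$ are correct and match the paper's argument in the case $\dim R/\a\le n$: there $\V(\a)_{\ge n}$ consists of finitely many minimal primes of $\a$, so one simply takes the maximum of the finitely many exponents $k_\q$. The difficulty is entirely in your ``main obstacle'' paragraph, and the spectral sequence does not resolve it.

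There are two problems. First, the module $H_{\a R_\q}^i(M_\q)=H_\a^i(M)_\q$ whose $\b^t$-annihilation you need does not sit on the spectral sequence: the term $E_2^{0,i}=\Gamma_{\q R_\q}\bigl(H_{\a R_\q}^i(M_\q)\bigr)$ is only the $\q R_\q$-torsion submodule, and this is a proper submodule exactly when $\q$ is not minimal over $\a$. So even perfect control of every $E_2$-term would not annihilate $H_\a^i(M)_\q$. Second, the inference runs the wrong way: $E_\infty$ is a subquotient of $E_2$, so $\b^{k_\q}$-annihilation of the abutment yields annihilation of $E_\infty$, not of $E_2$; reversing this would require controlling the sources and targets of all differentials, which land in bidegrees with total degree outside the range you have annihilated. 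The remark about ``$\a$-torsion of each $H_{\a R_\q}^r(M_\q)$'' does not help, since $\a R_\q\subsetneq\q R_\q$ and $\a$-torsion is weaker than $\q$-torsion. Finally, even granting a pointwise argument at each $\q$, there are in general infinitely many $\q\in\V(\a)_{\ge n}$, and your uniformity sketch via Proposition~\ref{21} and Corollary~\ref{22} only controls the ideals $\c_t(M)$, not the exponents coming out of a spectral-sequence chase.

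The paper handles the case $\dim R/\a>n$ by Noetherian induction on $\a$ instead. Assuming $\a$ is maximal among counterexamples, one chooses a minimal prime $\p_i$ of $\a$ with $\dim R/\p_i>n$ and an element $s\in(\bigcap_{j\ne i}\p_j)\setminus\p_i$ with $\a+Rs\ne R$ and $\b R_s\subseteq\c_{\height\p_i-\lambda_\a^\b(M)_n}(M_s)$ (using Lemma~\ref{24} and Corollary~\ref{22}). In $R_s$ one has $\sqrt{\a R_s}=\p_i R_s$, so Proposition~\ref{23} gives $\b^tH_\a^j(M_s)=0$ for $j<\lambda_\a^\b(M)_n-\cmd R$ directly, while $\a+Rs$ is strictly larger than $\a$ and hence satisfies the desired inequality by maximality. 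Feeding both into the exact sequence
\[
\cdots \longrightarrow H_\a^j(M_s)\longrightarrow H_\a^j(M)\longrightarrow H_{\a+Rs}^j(M)\longrightarrow\cdots
\]
produces the contradiction. You should replace the spectral-sequence step by this localization/Noetherian-induction mechanism.
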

\begin{proof}
In view of Remark \ref{14} (2), we may assume that $M$ is finitely generated.
We divide the case by the value of $\dim{R/\a}$.\\
(The case $\dim{R/\a} \leq n$):
Set $T:=\{\p\in\V(\a) \mid \dim{R/\p}=n\}$.
Note that $T$ is finite.
Suppose that $T = \emptyset$.  Since $\dim{R/\a} \leq n$ and $\supp(H_{\a}^i(M)) \subseteq \V(\a)$, we have $\dim\supp(\b^t H_{\a}^i(M)) < n$ for all $t,i\in\mathbb{N}$.
Hence, we get $f_\a^\b(M)_n = \infty$, and there is nothing to show.
Let $T \neq \emptyset$ and write $T=\{\p_1,\dots,\p_m\}$. Put $t_j := \height{\p_j} - \lambda_\a^\b(M)_n$.
Note that $\p_1,\dots,\p_m$ are minimal primes of $\a$.
For all $1\leq j \leq m$, we have
$$
\grade(\a R_{\p_j},{R_{\p_j}}) - t_j =\lambda_\a^\b(M)_n -(\height{\p_j} - \depth{R_{\p_j}}) \geq\lambda_\a^\b(M)_n - \cmd{R},
$$
where the first equality follows from the equality $\sqrt{\a R_{\p_j}} = \p_j R_{\p_j}$.
Hence, by Proposition \ref{23} and the inequality above, there exists $s_j\in\mathbb{N}$ such that ${\c_{t_j}(M_{\p_j})}^{s_j} H_{\a R_{\p_j} }^i(M_{\p_j}) = 0$ for all $i < \lambda_\a^\b(M)_n -\cmd{R}$ and for all $1\leq j \leq m$.
Put $s:=\max\{s_1,\cdots,s_m\}$.
It follows from Lemma \ref{24} that
$$
(\b R_{\p_j})^s H_{\a R_{\p_j} }^i(M_{\p_j}) \subseteq {\c_{t_j}(M_{\p_j})}^{s} H_{\a R_{\p_j} }^i(M_{\p_j}) = 0
$$
for all $i < \lambda_\a^\b(M)_n -\cmd{R}$ and for all $1\leq j \leq m$.
This implies that $\dim\supp({\b}^s H_{\a }^i(M)) > n$ for all $i < \lambda_\a^\b(M)_n -\cmd{R}$, and thus we obtain $f_\a^\b(M)_n \geq \lambda_\a^\b(M)_n - \cmd{R}$.\\
(The case $\dim{R/\a} > n$):
Supposing contrarily that $f_\a^\b(M)_n < \lambda_\a^\b(M)_n - \cmd{R}$, we look for a contradiction.
Since $R$ is Noetherian, we can (and do) assume that $\a$ is a maximal element of the set $\{\c \mid \b \subseteq \c, f_{\c}^{\b}(M)_n < \lambda_{\c}^{\b}(M)_n - \cmd{R}\}$.
Let $\p_1,\cdots,\p_m$ be the distinct minimal primes of $\a$.
Then, there exists an integer $1\leq i \leq m$ such that $\dim{R/\a} = \dim{R/\p_i}$.
Since $\dim{R/\p_i} > n$, it follows from Corollary \ref{22} and Lemma \ref{24} that
$$
\b R_{\p_i} \subseteq \c_{\height{\p_i} - \lambda_{\a}^{\b} (M)_n}(M_{\p_i}) = (\c_{\height{\p_i} - \lambda_{\a}^{\b} (M)_n}(M))_{\p_i}.
$$
Then there exists $u\in R\setminus\p_i$ such that 
\begin{equation}\label{ABC}
    \b R_{u} \subseteq (\c_{\height{\p_i} - \lambda_\a^\b(M)_n}(M))_{u}.
\end{equation}
Since $\dim{R/\p_i} > n$, there exists $\q\in\V(\p_i)$ such that $\dim{R/\q} = n$. 
Let $v\in\q\setminus\p_i$ and $w\in\cap_{j\neq i}\p_j\setminus\p_i$.
Put $s:=uvw$.
By $\eqref{ABC}$ and Corollary \ref{22}, we have
\begin{equation}\label{DEF}
    \b R_{s} \subseteq (\c_{\height{\p_i} - \lambda_\a^\b(M)_n}(M))_{s} =\c_{\height{\p_i} - \lambda_\a^\b(M)_n}(M_s).
\end{equation}
Now, there exists $\p\in\V(\a)$ such that $s\notin\p$ and $\grade(\a R_s, R_s) = \depth(R_s)_{\p R_s} = \depth{R_\p}$.
Since $s\in\cap_{j\neq i}\p_j\setminus\p_i$, we have $\p_i\subseteq\p$ and $\height{\p}\geq\height{\p_i}$.
Hence we obtain
$$
\grade(\a R_s, R_s) - (\height{\p_i} - \lambda_\a^\b(M)_n)
\geq \depth{R_\p} - (\height{\p} - \lambda_\a^\b(M)_n)
\geq \lambda_\a^\b(M)_n.
$$
Thus, by Proposition \ref{23} and the inequality above, there exists $t\in\mathbb{N}$ such that 
$$
(\c_{\height{\p_i} - \lambda_\a^\b(M)_n}(M_s))^t H_{\a R_s}^j(M_s) = 0
\text{ for all } i < \lambda_\a^\b(M)_n -\cmd{R}.
$$
Hence, by $\eqref{DEF}$ and \cite[Theorem 4.2.1]{BS}, we obtain 
$$
\b^t H_{\a}^j(M_s) = 0
\text{ for all } i < \lambda_\a^\b(M)_n -\cmd{R}.
$$
Since $\a \subsetneq \a + Rs \subsetneq R$, we have $\lambda_{\a + Rs}^\b(M)_n -\cmd{R} \leq f_{\a + Rs}^\b(M)_n$ from the maximality of $\a$.
Since $\lambda_{\a}^\b (M)_n \leq \lambda_{\a + Rs}^\b (M)_n$, there exists $t'\in\mathbb{N}$ such that $\dim\supp({\b}^{t'} H_{\a + Rs}^i(M)) > n$ for all integers $i < \lambda_\a^\b(M)_n -\cmd{R}$.
Hence, from the exact sequence
$$
\cdots \rightarrow H_{\a}^i(M_s) \rightarrow  H_{\a}^i(M) \rightarrow H_{\a + Rs}^i(M) \rightarrow \cdots,
$$
we see that there exists $k\in\mathbb{N}$ such that $\dim\supp({\b}^k H_{\a }^i(M))> n$ for all integers $i < \lambda_\a^\b(M)_n -\cmd{R}$.
Therefore, we obtain $f_\a^\b(M)_n \geq \lambda_\a^\b(M)_n - \cmd{R}$, and this is a contradiction.
\end{proof}
The following is an immediate consequence of Theorem \ref{25}.
This is no other than Corollary \ref{555}.
\begin{cor}
Assume that $R$ is an almost Cohen-Macaulay ring.
Let $\a$ and $\b$ be ideals of $R$ such that $\b \subseteq \a$, and let $M$ be a finitely generated $R$-module.
Then, 
$$
f_\a^\b(M) \geq \lambda_\a^\b(M) - \cmd{R}.
$$
\end{cor}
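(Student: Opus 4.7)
The plan is to derive this statement directly from Theorem \ref{25} by specializing to $n=0$. First I would check that a finitely generated $R$-module $M$ satisfies the hypothesis of being in dimension $<0$ required by Theorem \ref{25}: taking the submodule $N=M$ in the definition of dimension $<n$, the quotient $M/N$ is zero and so $\dim\supp(M/N) = -\infty < 0$. Hence the hypothesis of Theorem \ref{25} is met with $n=0$ for any finitely generated $R$-module.

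Next, I would invoke the two identities recorded immediately after the definition of the invariants, namely
$$
f_\a^\b(M)_0 = f_\a^\b(M)\qquad\text{and}\qquad \lambda_\a^\b(M)_0 = \lambda_\a^\b(M).
$$
Applying Theorem \ref{25} with $n=0$ to the almost Cohen-Macaulay ring $R$ and the finitely generated module $M$, and then substituting these identities, yields
$$
f_\a^\b(M) \;=\; f_\a^\b(M)_0 \;\geq\; \lambda_\a^\b(M)_0 - \cmd{R} \;=\; \lambda_\a^\b(M) - \cmd{R},
$$
which is exactly the claimed inequality.

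There is essentially no obstacle: the corollary is a direct translation of Theorem \ref{25} at the $n=0$ level, and all of the substantive work (the case split on $\dim R/\a$ and the use of Propositions \ref{23}, \ref{21} and Lemma \ref{24}) has already been carried out in the proof of Theorem \ref{25}. No additional ingredient beyond the definitions is needed.
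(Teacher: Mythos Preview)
Your proposal is correct and matches the paper's own proof exactly: the paper simply writes ``Put $n = 0$ in Theorem \ref{25}'', and you have supplied the routine verification that a finitely generated module is in dimension $<0$ together with the identities $f_\a^\b(M)_0 = f_\a^\b(M)$ and $\lambda_\a^\b(M)_0 = \lambda_\a^\b(M)$.
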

\begin{proof}
Put $n = 0$ in Theorem \ref{25}.
\end{proof}
Theorem \ref{25} leads to the following corollary, which contains Corollaries \ref{12}, \ref{777}.
The assertion (2) refines \cite[Theorem 2.14]{Dou}.
\begin{cor}\label{26}
Let $\a$ and $\b$ be ideals of $R$ such that $\b \subseteq \a$, and let $M$ be in dimension $< n$.
\begin{enumerate}[\rm(1)]
    \item Assume that $R$ is a homomorphic image of an almost Cohen-Macaulay ring.
    Then,
    $$
    f_\a^\b(M)_n \geq \lambda_\a^\b(M)_n - 1.
    $$
    \item Assume that $R$ is a homomorphic image of a Cohen-Macaulay ring.
    Then,
     $$f_\a^\b(M)_n = \lambda_\a^\b(M)_n.
    $$
    \item Assume that $R$ is a homomorphic image of an almost Cohen-Macaulay ring.
    Then,
    $$
    f_\a^\b(M) \geq \lambda_\a^\b(M) - 1.
    $$
    \item Assume that $R$ is a homomorphic image of a Cohen-Macaulay ring.
    Then,
     $$f_\a^\b(M) = \lambda_\a^\b(M).
    $$
\end{enumerate}
\end{cor}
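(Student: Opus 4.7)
The plan is to reduce all four parts to Theorem \ref{25} by realizing $R$ as a quotient of a suitable (almost) Cohen-Macaulay ring. Parts (3) and (4) are the $n=0$ specializations of (1) and (2), so I would concentrate on (1) and (2).

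First I would write $R = S/I$ with $S$ almost Cohen-Macaulay for (1) and Cohen-Macaulay for (2), let $\tilde{\a}$ and $\tilde{\b}$ denote the preimages of $\a$ and $\b$ in $S$ (both containing $I$), and regard $M$ as an $S$-module through $S\to R$. The crux is to verify that the invariants are preserved under this lifting:
\[
f_\a^\b(M)_n = f_{\tilde{\a}}^{\tilde{\b}}(M)_n, \qquad \lambda_\a^\b(M)_n = \lambda_{\tilde{\a}}^{\tilde{\b}}(M)_n,
\]
and that $M$ remains in dimension $<n$ over $S$. The first equality rests on $H_{\tilde{\a}}^i(M) = H_\a^i(M)$ and $\tilde{\b}^t H_\a^i(M) = \b^t H_\a^i(M)$ (since $I$ annihilates $M$), together with the preservation of dimension of support under the correspondence $\spec R \leftrightarrow \V(I) \subseteq \spec S$. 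For the second equality, primes $\q\in\spec S$ with $\q\not\supseteq I$ satisfy $M_\q = 0$ and contribute $\infty$ to the outer infimum; the remaining primes $\q\supseteq I$ correspond to $\p = \q/I\in\spec R$ with $S_\q/IS_\q\cong R_\p$, and the dimensions $\dim S/\q = \dim R/\p$, the depths $\depth_{S_\q}M_\q = \depth_{R_\p}M_\p$, and the heights of the relevant quotient ideals all transfer term by term to their $R$-counterparts.

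With these identifications in place, Theorem \ref{25} applied to $S$ yields
\[
f_\a^\b(M)_n \ge \lambda_\a^\b(M)_n - \cmd S.
\]
For (1), $\cmd S \le 1$, which is precisely the claimed bound. For (2), $\cmd S = 0$, giving $f_\a^\b(M)_n \ge \lambda_\a^\b(M)_n$; combined with the reverse inequality $f_\a^\b(M)_n \le \lambda_\a^\b(M)_n$ from Proposition \ref{100}, this yields the asserted equality.

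The main obstacle, modest though it is, lies in the careful bookkeeping for the $\lambda$-identification: one must confirm that the inner infimum defining $\lambda_{\tilde{\a} S_\q}^{\tilde{\b} S_\q}(M_\q)$ over primes in $\spec S_\q \setminus \V(\tilde{\b} S_\q)$ reduces to the corresponding infimum for $\lambda_{\a R_\p}^{\b R_\p}(M_\p)$ via $\p'\mapsto \p'/I$, using $\tilde{\b}\supseteq I$ to see that primes $\p'\not\supseteq I$ lie outside $\V(\tilde{\b})$ and contribute $\infty$. The remainder is a direct invocation of Theorem \ref{25} and Proposition \ref{100}.
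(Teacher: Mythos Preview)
Your proposal is correct and follows essentially the same approach as the paper: reduce to the case where the ambient ring is itself (almost) Cohen--Macaulay by lifting along $S\to R=S/I$, then apply Theorem~\ref{25} and, for (2), combine with Proposition~\ref{100}. The only difference is cosmetic: the paper dispatches the invariance of $f_\a^\b(M)_n$ and $\lambda_\a^\b(M)_n$ under this lifting by citing \cite[Lemmas~2.12 and~2.13]{Dou}, whereas you spell out the bookkeeping directly.
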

\begin{proof}
(1)
By \cite[Lemma 2.12, 2.13]{Dou}, we may assume that $R$ is itself an almost Cohen-Macaulay ring.
Since $\cmd{R} \leq 1$, we obtain $f_\a^\b(M)_n \geq \lambda_\a^\b(M)_n - 1$ by Theorem \ref{25}.\\
(2)
In the same way as in the proof above, we obtain $f_\a^\b(M)_n \geq \lambda_\a^\b(M)_n$.
Hence the assertion follows from Proposition \ref{100}.\\
(3) Put $n = 0$ in (1).\\
(4) Put $n = 0$ in (2).
\end{proof}

\begin{rem}
\begin{enumerate}[\rm(1)]
    \item In \cite{Ogo}, Ogoma constructed an example of a three-dimensional Noetherian normal domain $R$ which is not catenary.
    Since $R$ is normal, we have $\depth{R} \geq \min\{2,\dim{R}\} = 2$.
    We obtain $\cmd{R}=\dim{R}-\depth{R}\leq1$, and hence $R$ is an almost Cohen-Macaulay ring.
    However, since $R$ is not catenary, $R$ is not a homomorphic image of Cohen-Macaulay ring.
    Thus this is an example of a ring that is a homomorphic image of an almost Cohen-Macaulay ring, but not (a homorphic image of) a Cohen-Macaulay ring.
    \item By \cite[Example 3.1]{RN}, there exists a two-dimensional Noetherian local domain $(R, \m)$ such that $f_{\m}(R) = 1$ and $\lambda_{\m}(R) = 2$.
    Hence this satisfies the equation $f_{\m}(R) = \lambda_{\m}(R) - 1$.
\end{enumerate}
\end{rem}

Finally, we refine some results in \cite{DNEW} by using Theorem \ref{25}.
\begin{prop}\label{500}
\rm{(cf. \cite[Proposition 3.1]{DNEW})}
Assume that $R$ is a Cohen-Macaulay local ring.
Let $\a$ and $\b$ be ideals of $R$ such that $\b \subseteq \a$, and let $M$ be in dimension $<n$.
Then, 
$$
f_\a^\b(M)_n = \gamma_\a^\b(M)_n,
$$
where $\gamma_\a^\b(M)_n := \inf\{\depth M_\p + \height(\a + \p)/\p \mid \p \in \spec{R}\setminus\V(\b), \dim{R/(\a + \p) \geq n} \}$.
\end{prop}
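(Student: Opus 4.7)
The plan is to combine Corollary \ref{26}(2), which for the Cohen-Macaulay ring $R$ gives $f_\a^\b(M)_n = \lambda_\a^\b(M)_n$, with a direct comparison showing $\lambda_\a^\b(M)_n = \gamma_\a^\b(M)_n$. Every prime $\p$ entering either infimum automatically satisfies $\dim R/\p \geq n$ (for $\gamma$, because $\p$ is contained in some prime $\p' \supseteq \a + \p$ with $\dim R/\p' \geq n$), so the localizations of $M$ involved are all finitely generated by the in-dimension-$<n$ hypothesis.

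For $\gamma_\a^\b(M)_n \leq \lambda_\a^\b(M)_n$, I would fix $\p \in \spec{R}_{\geq n}$ and $\q \subseteq \p$ with $\b \not\subseteq \q$. If $\a + \q \not\subseteq \p$ then the $\lambda$-contribution is $+\infty$ and there is nothing to check; otherwise $\dim R/(\a+\q) \geq \dim R/\p \geq n$, so $\q$ is a valid $\gamma$-contributor. The CM-catenary identities $\height((\a+\q)R_\p/\q R_\p) = \height((\a+\q)R_\p) - \height \q$ and $\height(\a+\q)/\q = \height(\a+\q) - \height \q$, together with $\height(\a+\q) \leq \height((\a+\q)R_\p)$ (the latter is a minimum over a strictly smaller set of minimal primes of $\a+\q$), then give the desired bound.

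For the reverse inequality $\lambda_\a^\b(M)_n \leq \gamma_\a^\b(M)_n$, given $\q \in \spec{R} \setminus \V(\b)$ with $\dim R/(\a+\q) \geq n$, I would choose $\p$ to be a minimal prime of $\a + \q$ realizing $\dim R/\p = \dim R/(\a+\q)$. The Cohen-Macaulay identity $\height \p + \dim R/\p = \dim R$ forces $\height \p = \height(\a+\q)$ and excludes any other minimal prime of $\a+\q$ from being contained in $\p$, so $\p R_\p$ is the unique minimal prime of $(\a+\q)R_\p$ in $R_\p$. Hence $\height((\a+\q)R_\p) = \height(\a+\q)$, and the $\lambda$-contribution from the pair $(\p,\q)$ equals the $\gamma$-contribution from $\q$; taking infima yields the inequality. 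The main obstacle is the careful height arithmetic inside $R_\p/\q R_\p$, and the key trick is this choice of $\p$, which collapses the localized height of $\a+\q$ at $\p$ onto its global height in $R$.
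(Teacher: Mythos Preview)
Your argument is correct and follows the same overall strategy as the paper. The paper's proof simply instructs the reader to rerun the argument of \cite[Proposition~3.1]{DNEW} with Theorem~\ref{25} in place of \cite[Theorem~2.10]{sar}; the underlying plan in both cases is to invoke the annihilator theorem over a Cohen--Macaulay ring to obtain $f_\a^\b(M)_n=\lambda_\a^\b(M)_n$ (you cite Corollary~\ref{26}(2), which is the specialization of Theorem~\ref{25} to $\cmd R=0$), and then to identify $\lambda_\a^\b(M)_n$ with $\gamma_\a^\b(M)_n$ by height arithmetic. You have written this second step out explicitly rather than deferring to \cite{DNEW}, and your choice of $\p$ as a minimal prime of $\a+\q$ realizing $\dim R/\p=\dim R/(\a+\q)$ is exactly the right device: catenarity and the height--coheight formula $\height\s+\dim R/\s=\dim R$ in the Cohen--Macaulay local ring $R$ make all of the height identities you use (in particular $\height(\a+\q)/\q=\height(\a+\q)-\height\q$ and $\height((\a+\q)R_\p)=\height\p=\height(\a+\q)$) valid.
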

\begin{proof}
In the proof of \cite[Proposition 3.1]{DNEW}, replace \cite[Theorem 2.10]{sar} with our Theorem \ref{25}.
Then the argument does work.
\end{proof}

\begin{cor}
\rm{(cf. \cite[Theorem 3.3]{DNEW})}
Assume that $R$ is a homomorphic image of a Cohen-Macaulay local ring.
Let $\a$ and $\b$ be ideals of $R$ such that $\b \subseteq \a$, and let $M$ be in dimension $<n$.
Then, 
$$
f_\a^\b(M)_n = \gamma_\a^\b(M)_n.
$$
\end{cor}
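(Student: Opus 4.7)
The plan is to imitate the reduction pattern already used twice in the paper: just as \cite[Theorem 3.3]{DNEW} is deduced from \cite[Proposition 3.1]{DNEW}, I would deduce this corollary by running the same argument with our Proposition \ref{500} substituted for the Cohen-Macaulay local case. Concretely, the idea is to lift everything to a Cohen-Macaulay local ring via the defining surjection, invoke Proposition \ref{500} there, and descend back to $R$.

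Fix a surjection $\pi\colon S \twoheadrightarrow R$ with $S$ Cohen-Macaulay local, set $\a' := \pi^{-1}(\a)$ and $\b' := \pi^{-1}(\b)$, and view $M$ as an $S$-module via $\pi$. The main step is to identify both invariants under this change of rings. For $f$, the independence property of local cohomology gives $H^i_{\a}(M) \cong H^i_{\a'}(M)$ for all $i$, and the homeomorphism $\spec R \hookrightarrow \V(\ker\pi)$ preserves dimensions of supports; hence $M$ lies in dimension $<n$ over $S$ and $f_\a^\b(M)_n = f_{\a'}^{\b'}(M)_n$. For $\gamma$, the prime correspondence $\p \leftrightarrow \pi^{-1}(\p)$ together with $R_\p \cong S_{\pi^{-1}(\p)}/(\ker\pi)S_{\pi^{-1}(\p)}$ yields $\depth_{R_\p} M_\p = \depth_{S_{\pi^{-1}(\p)}} M_{\pi^{-1}(\p)}$ (since $M$ at $\pi^{-1}(\p)$ is annihilated by $\ker\pi$) and $\height((\a+\p)/\p) = \height((\a'+\pi^{-1}(\p))/\pi^{-1}(\p))$; these identifications are essentially \cite[Lemma 2.12, 2.13]{Dou}. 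Consequently $\gamma_\a^\b(M)_n = \gamma_{\a'}^{\b'}(M)_n$.

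With both sides transferred, Proposition \ref{500} applied to the $S$-module $M$ gives $f_{\a'}^{\b'}(M)_n = \gamma_{\a'}^{\b'}(M)_n$, and combining with the two identifications yields the corollary. The only slightly delicate point is the book-keeping in the prime/ideal correspondence—in particular, checking that the condition $\dim R/(\a + \p) \geq n$ matches its $S$-side counterpart—but this is immediate from $R/(\a + \p) \cong S/(\a' + \pi^{-1}(\p))$, so there is no genuine obstacle beyond what was already needed for Corollary \ref{26}.
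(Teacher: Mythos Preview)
Your proposal is correct and follows essentially the same approach as the paper: the paper's proof simply cites \cite[Lemma 2.12]{Dou} and \cite[Lemma 3.2]{DNEW} (the latter being precisely the change-of-rings compatibility for $\gamma$ that you spell out by hand) together with Proposition~\ref{500}. Your explicit verification of the $\gamma$-identification is exactly what \cite[Lemma 3.2]{DNEW} records, so the arguments coincide.
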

\begin{proof}
The assertion follows immediately from \cite[Lemma 2.12]{Dou}, \cite[Lemma 3.2]{DNEW} and Proposition \ref{500}.
\end{proof}
\section{Relationship between $f_\a^\b(M)_n$ and $f_\a^\b(M)^n$ over a ring of dimension at most 4}
In this section, we generalize results in \cite{BRS}, and prove Theorem \ref{13} and Corollary \ref{140}.
First, we begin with the following proposition, which is a generalization of \cite[Proposition 2.1]{BL}.
\begin{prop}\label{41}
Let $M$ be a finitely generated $R$-module, and let $i\in\mathbb{N}$.
Suppose that $H_{\a}^j(M)$ is in dimension $ < n$ for all $j < i$ and $N \subseteq H_{\a}^i(M)$ is in dimension $ < n$.
Then $\ass(H_{\a}^i(M)/N)_{\geq n}$ is finite.
\end{prop}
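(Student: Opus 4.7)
The plan is to prove the proposition by induction on $i$, adapting Brodmann--Lashgari's proof of \cite[Proposition 2.1]{BL} by replacing ``finitely generated'' with ``in dimension $<n$'' throughout, and ``$\ass$'' with ``$\ass_{\geq n}$''. The base case $i = 0$ is immediate: $H_\a^0(M) = \Gamma_\a(M)$ is a finitely generated submodule of $M$, so $H_\a^0(M)/N$ is finitely generated and therefore has finite $\ass$.

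For the inductive step I would first reduce to the case where $N$ is finitely generated. Picking finitely generated $N_0 \subseteq N$ with $\dim\supp(N/N_0) < n$, at every $\p$ with $\dim R/\p \geq n$ one has $(N/N_0)_\p = 0$, so $\ass(H_\a^i(M)/N)_{\geq n} = \ass(H_\a^i(M)/N_0)_{\geq n}$. Then, passing to $\bar M := M/\Gamma_\a(M)$ (which preserves $H_\a^j$ for $j \geq 1$), I choose $x \in \a$ that is a non-zerodivisor on $\bar M$, possible because $\Gamma_\a(\bar M) = 0$ forces $\ass \bar M$ to avoid $\V(\a)$. The long exact sequence attached to $0 \to \bar M \xrightarrow{x} \bar M \to \bar M/x\bar M \to 0$, together with the closure of the class of modules in dimension $<n$ under submodules, quotients, and extensions, ensures that $H_\a^j(\bar M/x\bar M)$ is in dimension $<n$ for every $j < i-1$.

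Set $H := H_\a^i(\bar M)$, $K := (0:_H x)$, and $E := H/N$. Since $E$ is $\a$-torsion, every associated prime of $E$ contains $x$, so $\ass(E)_{\geq n} = \ass((0:_E x))_{\geq n}$ and $(0:_E x) = (N:_H x)/N$. The short exact sequence
$$
0 \to (N+K)/N \to (N:_H x)/N \to (N:_H x)/(N+K) \to 0
$$
has right term isomorphic via multiplication by $x$ to $(N \cap xH)/xN$, which embeds in the finitely generated module $N/xN$ and hence has finite $\ass$. The left term $K/(N \cap K)$ is controlled by the inductive hypothesis at index $i-1$: the long exact sequence supplies a surjection $\phi : H_\a^{i-1}(\bar M/x\bar M) \twoheadrightarrow K$, so for $L := \phi^{-1}(N \cap K)$ one has $H_\a^{i-1}(\bar M/x\bar M)/L \cong K/(N \cap K)$, and $L$ is an extension of the finitely generated $N \cap K$ by $\ker \phi$, which is the image of $H_\a^{i-1}(\bar M) \to H_\a^{i-1}(\bar M/x\bar M)$ and hence a quotient of the in-dimension-$<n$ module $H_\a^{i-1}(\bar M)$; thus $L$ is in dimension $<n$.

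The central obstacle I expect is this last step --- verifying that the submodule $L$ used to invoke the inductive hypothesis is in dimension $<n$ --- which rests on the elementary but essential fact that the class of modules in dimension $<n$ is closed under subquotients and extensions. Once this bookkeeping is in place, the Brodmann--Lashgari framework adapts cleanly to yield the claim.
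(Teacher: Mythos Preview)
Your proposal is correct and follows essentially the same inductive adaptation of the Brodmann--Lashgari argument as the paper: reduce $N$ to a finitely generated submodule, pass to $M/\Gamma_{\a}(M)$, pick a regular element in $\a$, and use the long exact sequence together with the closure of the ``in dimension $<n$'' class under subquotients and extensions (the paper cites \cite[Proposition~2.12]{MNS} for this) to set up the inductive hypothesis at level $i-1$. The only notable difference is that the paper replaces the regular element $y$ by a power $x=y^{m}$ annihilating the finitely generated $N'$, which forces $N'\subseteq K=(0:_{H}x)$ and collapses your two-piece analysis of $(N':_{H}x)/N'$ into the single containment $\ass(H_{\a}^{i}(M)/N')_{\ge n}\subseteq\ass(T)_{\ge n}\cup\ass(N')_{\ge n}$, allowing a direct appeal to the proof of \cite[Proposition~2.1]{BL}; your extra term $(N\cap xH)/xN$ is exactly what this trick eliminates.
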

\begin{proof}
We use induction on $i$.
The case $i = 0$ is clear since $H_{\a}^0(M)$ is finitely generated.
Let $i > 0$.
It follows from \cite[Lemma 2.1.2 and Corollary 2.1.7 (iii)]{BS} that $H_{\a}^0(M/\Gamma_{\a}(M)) = 0$ and $H_{\a}^i(M/\Gamma_{\a}(M))\cong H_{\a}^i(M)$ for all $i > 0$.
We may replace $M$ by $M/\Gamma_{\a}(M)$ and assume $M$ is an $\a$-torsion-free $R$-module.
There exists an $M$-regular element $y\in\a$ by \cite[Lemma 2.1.1(ii)]{BS}.
Since $N$ is in dimension $ < n$, there exists a finitely generated submodule $N'$ of $N$ such that $\dim{\supp(N/N')} < n$.
Then it is easy to see that $\ass(H_{\a}^i(M)/N)_{\geq n} = \ass(H_{\a}^i(M)/N')_{\geq n}$.
It is enough to show that $\ass(H_{\a}^i(M)/N')_{\geq n}$ is finite.
Since $N' \subseteq H_{\a}^i(M)$ is a finitely generated $\a$-torsion $R$-module, there exists $n\in\mathbb{N}$ such that $y^n N' = 0$.
Put $x = y^n$.
The exact sequence $0 \to M \to M \to M/xM \to 0$ induces the exact sequence $H_{\a}^j(M) \to H_{\a}^j(M/xM) \to H_{\a}^{j+1}(M)$ for all $j\in\mathbb{N}$.
By \cite[proposition 2.12]{MNS}, we see that $H_{\a}^j(M/xM)$ is in dimension $< n$ for all $j < i-1$.
Moreover, by diagram chase, we get the following commutative diagram with exact rows and columns
in which $\delta$ is the connecting homomorphism and in which $\varepsilon$ is a natural map:
\[
\xymatrix@M=10pt{
H_{\a}^{i-1}(M) \ar[r]^-{\varepsilon} & H_{\a}^{i-1}(M/xM) \ar[d] \ar[r]^-{\delta} & H_{\a}^i(M) \ar[d] \ar[r]^-{x} & H_{\a}^i(M)\\
0 \ar[r]  & H_{\a}^{i-1}(M/xM)/{\delta^{-1}(N')} \ar[d] \ar[r]^-{\bar{\delta}} & H_{\a}^i(M)/N' \ar[d] \ar[r]^-{\bar{x}} &  H_{\a}^i(M)\\
& 0 & 0
}
\] 
Since $\ker(\delta) = \varepsilon(H_{\a}^{i-1}(M))$ and $N'$ are both in dimension $ < n$, so is $\delta^{-1}(N')$.
By induction, we obtain $\ass(T)_{\geq n}$ is finite, where $T:= H_{\a}^{i-1}(M/xM)/{\delta^{-1}(N')}$.
By the proof of \cite[Proposition 2.1]{BL}, we see that 
$$
\ass(H_{\a}^i(M)/N')_{\geq n} \subseteq \ass(T)_{\geq n} \cup \ass(N')_{\geq n},
$$
and hence $\ass(H_{\a}^i(M)/N')_{\geq n}$ is finite.
\end{proof}

\begin{lem}\label{42}
Let M be in dimension $ < n$, that is to say, let $M$ be an $R$-module such that there exists a finitely generated submodule $N$ of $M$ such that $\dim{\supp(M/N)} < n$.
If $H_{\a}^i(N)$ is in dimension $ < n$, then $H_{\a}^i(M)$ is in dimension $ < n$.
\end{lem}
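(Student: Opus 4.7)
The plan is to apply the long exact sequence of local cohomology to the short exact sequence $0 \to N \to M \to M/N \to 0$ and exploit closure properties of the class of modules in dimension $< n$.

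First I would observe that for any $R$-module $L$ and any $j \in \mathbb{N}$ one has $\supp H_\a^j(L) \subseteq \supp L$, so the hypothesis $\dim \supp(M/N) < n$ forces $\dim \supp H_\a^i(M/N) < n$. In particular $H_\a^i(M/N)$ is in dimension $< n$, witnessed by the zero submodule.

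Next I would verify that the class of $R$-modules in dimension $< n$ is closed under submodules, quotients, and extensions. Given $X$ in dimension $< n$ with witness $X' \subseteq X$ finitely generated and $\dim \supp(X/X') < n$: for a submodule $Y \subseteq X$, the intersection $Y \cap X'$ is finitely generated (being a submodule of the noetherian module $X'$) and $Y/(Y \cap X')$ embeds into $X/X'$, so $Y$ is in dimension $< n$; for a quotient $X \twoheadrightarrow Z$, the image of $X'$ is finitely generated and its cokernel is a quotient of $X/X'$. For an extension $0 \to A \to B \to C \to 0$ with $A$ and $C$ in dimension $< n$, lifting a finite generating set of an appropriate f.g.\ submodule $C' \subseteq C$ to elements of $B$ and adjoining them to a witness $A' \subseteq A$ for $A$ produces a finitely generated submodule $B' \subseteq B$ for which the exact sequence $0 \to A/(A \cap B') \to B/B' \to C/\Im(B') \to 0$ has outer terms with support of dimension $< n$.

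With these tools in hand, the long exact sequence yields
$$H_\a^i(N) \to H_\a^i(M) \to H_\a^i(M/N),$$
and I would let $K$ denote the image of the left map. As a quotient of $H_\a^i(N)$, which is in dimension $< n$ by assumption, $K$ is in dimension $< n$. The quotient $H_\a^i(M)/K$ embeds into $H_\a^i(M/N)$, which is in dimension $< n$ by the first step, so $H_\a^i(M)/K$ is in dimension $< n$ as well. Applying the closure under extensions to $0 \to K \to H_\a^i(M) \to H_\a^i(M)/K \to 0$ finishes the proof. There is no substantial obstacle here; the argument is a bookkeeping exercise once the Serre-subcategory-type closure properties for the ``in dimension $< n$'' condition are in place.
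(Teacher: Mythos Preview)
Your argument is correct, but it takes a different route from the paper's. The paper proceeds directly: given a finitely generated $L\subseteq H_\a^i(N)$ with $\dim\supp(H_\a^i(N)/L)<n$, it pushes $L$ forward along the natural map $f\colon H_\a^i(N)\to H_\a^i(M)$ and shows $f(L)$ is a witness for $H_\a^i(M)$, by observing that for every $\p$ with $\dim R/\p\ge n$ the localizations $f_\p$ and $i_\p\colon L_\p\hookrightarrow H_\a^i(N)_\p$ are isomorphisms (since $(M/N)_\p=0$ and $(H_\a^i(N)/L)_\p=0$), so the inclusion $f(L)\hookrightarrow H_\a^i(M)$ is an isomorphism after such a localization. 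This is shorter and avoids any general machinery. Your approach, by contrast, establishes once and for all that the class of modules in dimension $<n$ is a Serre subcategory and then reads the conclusion off the long exact sequence; this is slightly longer here but yields a reusable lemma and makes the structural reason transparent. Both are perfectly fine; neither requires anything beyond elementary considerations.
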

\begin{proof}
Since $H_{\a}^i(N)$ is in dimension $ < n$, there exists a finitely generated submodule $L$ of $H_{\a}^i(N)$ such that $\dim{\supp(H_{\a}^i(N)/L)} < n$.
Now, we have the following commutative diagram in which $f$ is a natural map and in which $i$ and $j$ are inclusion maps:
\[
\xymatrix@M=8pt{
H_{\a}^i(N) \ar[r]^f   & H_{\a}^i(M) \\
L \ar[u]^i \ar[r] & f(L) \ar[u]^j
}
\] 
By the definitions of $N$ and $L$, we see that $f_{\p}$ and $i_{\p}$ are isomorphisms for all $\p\in\spec{R}_{\geq n}$.
It follows from the commutative diagram above that $j_{\p}$ is an isomorphism for all $\p\in\spec{R}_{\geq n}$.
Hence we have $\dim{\supp(H_{\a}^i(M)/f(L))} < n$, and thus $H_{\a}^i(M)$ is in dimension $ < n$.
\end{proof}
\begin{lem}\label{53}
Assume that $\ass(M)_{\geq n}$ is a nonempty finite set, that $\ass(M/K)_{\geq n}$ is a finite set for all finitely generated submodules $K$ of $M$, and that $M_{\p}$ is a finitely generated $R_{\p}$-module for all $\p\in\ass(M)_{\geq n}$.
Then there exists a finitely generated submodule $N$ of $M$ such that
$$
\bigcap_{\p\in\ass(M)_{\geq n}}\p \subsetneq \bigcap_{\p\in\ass(M/N)_{\geq n}}\p 
$$
\end{lem}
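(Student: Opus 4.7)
The plan is to construct a finitely generated submodule $N\subseteq M$ that removes one specific associated prime of $M$ upon passing to the quotient, and then to verify both the inclusion and its strictness. Let $\{\p_1,\dots,\p_k\}=\ass(M)_{\geq n}$, which is nonempty and finite by hypothesis. Since the hypothesis guarantees that $M_{\p_1}$ is finitely generated over $R_{\p_1}$, I pick $x_1,\dots,x_m\in M$ whose images generate $M_{\p_1}$ and set $N=Rx_1+\cdots+Rx_m$. Then $N$ is finitely generated, $(M/N)_{\p_1}=0$, and $\ass(M/N)_{\geq n}$ is finite by the second hypothesis.

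For the inclusion $\bigcap_{\p\in\ass(M)_{\geq n}}\p\subseteq\bigcap_{\q\in\ass(M/N)_{\geq n}}\q$, I take an arbitrary $\q\in\ass(M/N)_{\geq n}$. Then $\q\in\supp(M/N)\subseteq\supp(M)$, so $M_\q\neq 0$; since $R_\q$ is Noetherian, the module $M_\q$ has a nonempty set of associated primes, yielding some $\p\in\ass(M)$ with $\p\subseteq\q$. The inequality $\dim R/\p\geq\dim R/\q\geq n$ then places $\p$ in $\ass(M)_{\geq n}$, so $\bigcap_{\p'\in\ass(M)_{\geq n}}\p'\subseteq\p\subseteq\q$. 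Intersecting over all such $\q$ yields the desired inclusion.

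For strictness I verify that $\bigcap_{\q\in\ass(M/N)_{\geq n}}\q\not\subseteq\p_1$. First, no $\q\in\ass(M/N)_{\geq n}$ can be contained in $\p_1$: if $\q\subseteq\p_1$, then $\q R_{\p_1}\in\ass_{R_{\p_1}}((M/N)_{\p_1})=\ass_{R_{\p_1}}(0)=\emptyset$, a contradiction. Since $\ass(M/N)_{\geq n}$ is a finite family of ideals, none of which is contained in the prime $\p_1$, the product of these ideals is not contained in $\p_1$; hence neither is their intersection. A witness element therefore lies in the right-hand intersection but outside $\p_1$, and since $\bigcap_{\p\in\ass(M)_{\geq n}}\p\subseteq\p_1$, it also lies outside the left-hand intersection. (If $\ass(M/N)_{\geq n}=\emptyset$, the right-hand intersection is the unit ideal and strictness is immediate.)

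The main obstacle, I expect, is the simultaneous engineering of $N$: it must be finitely generated, must eliminate $\p_1$ from the support of the quotient, and must not introduce new associated primes of dimension $\geq n$ that happen to lie below $\p_1$. The finite generation of $M_{\p_1}$ resolves the first two requirements in a single construction, and the localization argument above then takes care of the third.
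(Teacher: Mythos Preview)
Your proof is correct. The argument differs from the paper's in one structural choice: the paper chooses $N=N_1+\cdots+N_m$ so that $(M/N)_{\p_j}=0$ for \emph{every} $\p_j\in\ass(M)_{\geq n}$, obtaining $\ass(M)_{\geq n}\cap\ass(M/N)_{\geq n}=\emptyset$, and then derives strictness by contradiction using a minimal element of $\ass(M)_{\geq n}$. You instead kill only a single prime $\p_1$ and argue directly that no $\q\in\ass(M/N)_{\geq n}$ lies under $\p_1$, whence the finite intersection on the right avoids $\p_1$ while the left-hand intersection does not. Both routes rest on the same three ingredients---finite generation of $M_{\p}$ to build $N$, the inclusion $\supp(M/N)\subseteq\supp(M)$ together with $\ass$--localization to get the containment, and a prime-avoidance step for strictness---so the difference is cosmetic; your version is marginally more economical in that it requires handling only one localization.
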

\begin{proof}
Set $\ass(M)_{\geq n} = \{\p_1,\cdots,\p_m\}$.
Then, for each $1\leq j \leq m$, there exists a finitely generated submodule $N_j$ of $M$ such that $M_{\p_j} = (N_j)_{\p_j}$.
Set $N = N_1 + \cdots + N_m$.
Then $N$ is a finitely generated submodule of $M$, and we have $(M/N)_{\p_j} = 0$ for all $1\leq j \leq m$.
We get $\ass(M)_{\geq n} \cap \ass(M/N)_{\geq n} = \emptyset$.
We will show that the assertion holds for this $N$.
If $\ass(M/N)_{\geq n} = \emptyset$, then it is clear.
Suppose that $\ass(M/N)_{\geq n} \neq \emptyset$.
First, we show that $\bigcap_{\p\in\ass(M)_{\geq n}}\p \subseteq \bigcap_{\p\in\ass(M/N)_{\geq n}}\p$.
Let $\s\in\ass(M/N)_{\geq n}$.
Then $\s\in\supp(M)_{\geq n}$, and hence there exists $\q\in\ass(M)_{\geq n}$ such that $\q\subseteq\s$.
Hence we have
$$
\bigcap_{\p\in\ass(M)_{\geq n}}\p \subseteq \q \subseteq \s.
$$
Next, we show that $\bigcap_{\p\in\ass(M)_{\geq n}}\p \neq \bigcap_{\p\in\ass(M/N)_{\geq n}}\p$.
Suppose on the contrary, that $\bigcap_{\p\in\ass(M)_{\geq n}}\p = \bigcap_{\p\in\ass(M/N)_{\geq n}}\p$, and we look for a contradiction.
Let $\p$ be a minimal element of $\ass(M)_{\geq n}$ with respect to the inclusion relation.
Since $\ass(M/N)_{\geq n}$ is finite by the assumption, there exists $\q\in\ass(M/N)_{\geq n}$ such that $\q \subseteq \p$.
Then, since $\q\in\supp(M)$, we get $\p = \q$.
This contradicts the fact that $\ass(M)_{\geq n} \cap \ass(M/N)_{\geq n} = \emptyset$.
\end{proof}
By using the two results above, we prove the following proposition, which gives us a short proof of \cite[Proposition 2.9]{MNS}.
\begin{prop}\label{43}
Let $M$ be in dimension $< n$, and let $i < f_{\a}(M)^n$.
Then $H_{\a}^i(M)$ is in dimension $< n$.
\end{prop}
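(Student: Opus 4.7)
The plan is to proceed by induction on $i$, after first reducing to the case that $M$ is finitely generated. Since $M$ is in dimension $<n$, choose a finitely generated submodule $N\subseteq M$ with $\dim\supp(M/N)<n$; then $M_\p=N_\p$ for every $\p\in\spec{R}_{\geq n}$, so $f_\a(M)^n=f_\a(N)^n$, and Lemma \ref{42} reduces the problem to showing that $H_\a^i(N)$ is in dimension $<n$. Hence we may assume $M$ is finitely generated. The base case $i=0$ is immediate, since $\Gamma_\a(M)\subseteq M$ is finitely generated and therefore itself in dimension $<n$.

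For the inductive step $i\ge 1$, the inductive hypothesis says that $H_\a^j(M)$ is in dimension $<n$ for all $j<i$. I would then record two key inputs. First, any finitely generated submodule $K\subseteq H_\a^i(M)$ is in dimension $<n$, so Proposition \ref{41} yields that the set $\ass(H_\a^i(M)/K)_{\geq n}$ is finite. Second, for each $\p\in\spec{R}_{\geq n}$ the hypothesis $i<f_\a(M)^n\le f_{\a R_\p}(M_\p)$, combined with the classical equivalence between the two standard definitions of the finiteness dimension (see, e.g., \cite[Proposition 9.1.2]{BS}), forces $H_\a^i(M)_\p=H_{\a R_\p}^i(M_\p)$ to be a finitely generated $R_\p$-module.

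With these inputs I would apply Lemma \ref{53} iteratively: starting from $K_0=0$, as long as $\ass(H_\a^i(M)/K_j)_{\geq n}$ is nonempty, the lemma (applied to $H_\a^i(M)/K_j$ and with the resulting submodule lifted back to $H_\a^i(M)$) produces a finitely generated $K_{j+1}\supseteq K_j$ satisfying
\[
\bigcap_{\p\in\ass(H_\a^i(M)/K_j)_{\geq n}}\p\ \subsetneq\ \bigcap_{\p\in\ass(H_\a^i(M)/K_{j+1})_{\geq n}}\p.
\]
Noetherianity of $R$ forces this strictly ascending chain of ideals to terminate, yielding a finitely generated $K\subseteq H_\a^i(M)$ with $\ass(H_\a^i(M)/K)_{\geq n}=\emptyset$. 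To pass from empty associated primes to empty support in dimension $\ge n$, suppose for contradiction that some $\p\in\spec{R}_{\geq n}$ lies in $\supp(H_\a^i(M)/K)$; pick $l\in H_\a^i(M)/K$ with $(0:_R l)\subseteq\p$ and a minimal prime $\q$ of $(0:_R l)$ contained in $\p$. Then $\dim R/\q\ge n$ and $\q\in\ass(R/(0:_R l))=\ass(Rl)\subseteq\ass(H_\a^i(M)/K)$, contradicting emptiness. Thus $\dim\supp(H_\a^i(M)/K)<n$ and $H_\a^i(M)$ is in dimension $<n$.

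The step I expect to be the main obstacle is the second of these inputs: upgrading the hypothesis $i<f_{\a R_\p}(M_\p)$ to finite generation of $H_{\a R_\p}^i(M_\p)$ over $R_\p$. This is where the classical characterization of the finiteness dimension must be invoked, and it must dovetail with the inductive hypothesis so that all hypotheses of Lemma \ref{53} remain valid at each successive stage of the iteration.
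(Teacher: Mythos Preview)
Your proof is correct and follows essentially the same approach as the paper: reduce to finitely generated $M$ via Lemma \ref{42}, induct on $i$, and in the inductive step combine Proposition \ref{41}, the finite generation of $H_\a^i(M)_\p$ from \cite[Proposition 9.1.2]{BS}, and iterated use of Lemma \ref{53} together with Noetherianity of $R$. The only cosmetic difference is that the paper argues by contradiction (assuming $H_\a^i(M)$ is not in dimension $<n$ and deriving an infinite strictly ascending chain of ideals), whereas you run the iteration constructively until it terminates and then verify explicitly that $\ass(H_\a^i(M)/K)_{\geq n}=\emptyset$ forces $\dim\supp(H_\a^i(M)/K)<n$.
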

\begin{proof}
In view of Remark \ref{14} (2) and Lemma \ref{42}, we may assume that $M$ is a finitely generated $R$-module.
We use induction on $i$.
The case $i = 0$ is clear since $M$ is finitely generated.
Let $i > 0$, and assume that $H_{\a}^j(M)$ is in dimension $< n$ for all $j < i$.
Suppose on the contrary, that $H_{\a}^i(M)$ is not in dimension $< n$, and we look for a contradiction.
Then $\ass(H_{\a}^i(M)/K)_{\geq n} \neq \emptyset$ for all finitely generated submodule $K$ of $H_{\a}^i(M)$.
By proposition \ref{41},  $\ass(H_{\a}^i(M)/K)_{\geq n}$ is finite for all finitely generated submodule $K$ of $H_{\a}^i(M)$.
Since $i < f_{\a}(M)^n$, it follows from \cite[Proposition 9.1.2]{BS} that $(H_{\a}^i(M))_{\p}$ is finitely generated for all $\p\in\spec(R)_{\geq n}$.
Hence, by repeatedly using Lemma \ref{53}, there exists the following sequence of ideals
$$
\bigcap_{\p\in\ass(H_{\a}^i(M))_{\geq n}}\p \subsetneq \bigcap_{\p\in\ass(H_{\a}^i(M)/N_0)_{\geq n}}\p \subsetneq \cdots \subsetneq \bigcap_{\p\in\ass(H_{\a}^i(M)/N_i)_{\geq n}}\p \subsetneq \cdots,
$$
where $N_i$ is a finitely generated submodule of $H_{\a}^i(M)$ for all $i \in \mathbb{N}$.
This contradicts the fact that $R$ is Noetherian.
\end{proof}

To state the results below, we introduce an invariant, which is a generalization of the grade of a module.
\begin{dfn}\label{45}
Let $M$ be in dimension $< n$.
We define the $n$-th grade of $\a$ on $M$ as follows.
$$
\grade(\a,M)_{\geq n} := \inf\{\grade((\a R_{\p},M_{\p})\mid\p\in\supp(M/{\a M})_{\geq n}\}.
$$
Note that $\grade(\a,M)_{\geq n} = \infty$ if and only if $\supp(M/{\a M})_{\geq n} = \emptyset$.
\end{dfn}
\begin{prop}\label{46}
Let $M$ be in dimension $< n$.
Then $\grade(\a,M)_{\geq n} = \inf\{i\mid\dim\supp(H_{\a}^i(M))\geq n\}$.
\end{prop}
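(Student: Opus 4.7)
The plan is to prove both inequalities of the asserted equality directly from the definitions. Set $g := \grade(\a,M)_{\geq n}$ and $h := \inf\{i \mid \dim \supp(H_\a^i(M)) \geq n\}$. I shall invoke two standard facts: (i) the commutation of local cohomology with localization, $(H_\a^i(M))_\p = H_{\a R_\p}^i(M_\p)$, and (ii) the classical identification of grade with the least non-vanishing local cohomology, $\grade(\a R_\p, M_\p) = \inf\{i \mid H_{\a R_\p}^i(M_\p) \neq 0\}$, valid whenever $\a R_\p M_\p \neq M_\p$.

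To prove $g \leq h$, I may assume $h < \infty$. Then there exists $\p$ with $\dim R/\p \geq n$ belonging to $\supp(H_\a^h(M))$, so that by (i) the module $H_{\a R_\p}^h(M_\p)$ is nonzero. Because $M$ is in dimension $<n$, the localization $M_\p$ is finitely generated over $R_\p$, and Nakayama's lemma applied to the nonzero module $H_{\a R_\p}^h(M_\p)$ forces $\a R_\p M_\p \neq M_\p$; equivalently, $\p \in \supp(M/\a M)_{\geq n}$. Fact (ii) now gives $\grade(\a R_\p, M_\p) \leq h$, whence $g \leq h$ by the definition of $g$.

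For $g \geq h$, I may assume $g < \infty$; since the grade takes non-negative integer values, the infimum defining $g$ is attained at some $\p \in \supp(M/\a M)_{\geq n}$ with $\grade(\a R_\p, M_\p) = g$. Applying (ii) at this $\p$ yields $H_{\a R_\p}^g(M_\p) \neq 0$, and then (i) yields $\p \in \supp(H_\a^g(M))$. Since $\dim R/\p \geq n$, we conclude $\dim\supp(H_\a^g(M)) \geq n$, so $h \leq g$.

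The argument is essentially bookkeeping with the definitions, and I do not anticipate a substantive obstacle. The one point requiring attention is the verification that a prime in $\supp(H_\a^h(M))_{\geq n}$ automatically lies in $\supp(M/\a M)_{\geq n}$; this is where the hypothesis that $M$ is in dimension $<n$ (guaranteeing finite generation of $M_\p$) enters, via Nakayama.
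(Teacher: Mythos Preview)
Your proof is correct and follows essentially the same approach as the paper: both arguments reduce to the classical identification $\grade(\a R_\p,M_\p)=\inf\{i\mid H_{\a R_\p}^i(M_\p)\neq 0\}$ together with the containment $\supp(H_\a^i(M))_{\geq n}\subseteq\supp(M/\a M)_{\geq n}$, the paper organizing this as ``vanishing below $g$ and nonvanishing at $g$'' while you phrase it as two inequalities. Your explicit use of Nakayama (applied to the finitely generated module $M_\p$, not to $H_{\a R_\p}^h(M_\p)$ as your wording suggests) to verify $\p\in\supp(M/\a M)$ is in fact slightly more careful than the paper, which asserts the support containment without comment.
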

\begin{proof}
Let $g:=\grade(\a,M)_{\geq n}$.
If $g = \infty$, then we have $\supp(M/{\a M})_{\geq n} = \emptyset$.
Since $\supp(H_{\a}^i(M)) \subseteq \supp(M/{\a M})$ for all $i\in\mathbb{N}$, we get $\dim{\supp(H_{\a}^i(M))} < n$ for all $i\in\mathbb{N}$.
Thus we have $g = \infty = \inf\{i\mid\dim{\supp(H_{\a}^i(M))\geq n}\}$.
Suppose that $g < \infty$.
If $i < g$, then we have $i < \grade(\a R_{\p},M_{\p})$ for all $\p\in\supp(M/{\a M})_{\geq n}$.
It follows from \cite[Theorem 6.2.7]{BS} that $(H_{\a}^i(M))_{\p} = 0$ for all $i < g$ and for all $\p\in\supp(M/{\a M})_{\geq n}$.
Thus we get $\dim{\supp(H_{\a}^i(M))} < n$ for all $i < g$.
Let $\p\in\supp(M/{\a M})_{\geq n}$ such that $g = \grade(\a R_{\p},M_{\p})$.
Then we have $(H_{\a}^g(M))_{\p} \neq 0$ by \cite[Theorem 6.2.7]{BS}, and hence we get $\dim\supp(H_{\a}^g(M)) \geq n$.
Thus the assertion follows.
\end{proof}
By Propositions \ref{41} and \ref{46}, we prove the following proposition, which is a generalization of \cite[Proposition 2.2]{BRS}.
\begin{prop}\label{47}
Let $M$ be in dimension $< n$, and $\supp(M/{\a M})_{\geq n}$ is nonempty.
Then the set $\ass(H_{\a}^g(M))_{\geq n}$ is finite.
\end{prop}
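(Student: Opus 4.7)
The plan is to reduce Proposition \ref{47} to a direct application of Proposition \ref{41}, using Proposition \ref{46} to convert the grade hypothesis into a statement about the vanishing of supports of lower local cohomology. By Proposition \ref{46}, the assumption that $\supp(M/\a M)_{\geq n}$ is nonempty forces $g = \grade(\a, M)_{\geq n} = \inf\{i \mid \dim\supp(H_\a^i(M)) \geq n\}$ to be finite, and for each $j < g$ we have $\dim\supp(H_\a^j(M)) < n$. This means $H_\a^j(M)$ is in dimension $< n$ (taking the zero submodule as the required finitely generated one), so the first hypothesis of Proposition \ref{41} is satisfied at $i = g$.

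Next, I would reduce to the finitely generated case just as in the proof of Proposition \ref{43}. Since $M$ is in dimension $<n$, choose a finitely generated submodule $N' \subseteq M$ with $\dim\supp(M/N') < n$. Then $M_\p = N'_\p$ for every $\p \in \spec R_{\geq n}$, which gives $H_\a^i(M)_\p = H_\a^i(N')_\p$ for such $\p$. Consequently $\grade(\a, M)_{\geq n} = \grade(\a, N')_{\geq n}$ and $\ass(H_\a^g(M))_{\geq n} = \ass(H_\a^g(N'))_{\geq n}$, so we may replace $M$ by $N'$ and assume $M$ is finitely generated.

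With $M$ finitely generated and $H_\a^j(M)$ in dimension $< n$ for all $j < g$, I apply Proposition \ref{41} with $i = g$ and the (trivially finitely generated, hence in dimension $< n$) submodule $0 \subseteq H_\a^g(M)$. The conclusion of Proposition \ref{41} is then precisely that $\ass(H_\a^g(M))_{\geq n} = \ass(H_\a^g(M)/0)_{\geq n}$ is finite.

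There is no serious obstacle here; the content of the proposition has essentially been prepared by Propositions \ref{41} and \ref{46}, and the only bookkeeping is the standard reduction to the finitely generated case via $N'$ and the observation that $\ass$ restricted to primes of dimension $\geq n$ depends only on the localizations at such primes.
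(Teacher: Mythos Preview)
Your proposal is correct and follows essentially the same approach as the paper: reduce to finitely generated $M$ via a submodule $N'$ with $\dim\supp(M/N')<n$, use Proposition~\ref{46} to see that $H_\a^j(M)$ has support of dimension $<n$ (hence is in dimension $<n$) for $j<g$, and then apply Proposition~\ref{41} with $N=0$. The only cosmetic difference is that the paper performs the reduction to the finitely generated case first and then invokes Proposition~\ref{46}, whereas you do these in the opposite order; both are fine.
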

\begin{proof}
Let $g:=\grade(\a,M)_{\geq n} < \infty$.
Since $M$ is in dimension $< n$, there exists a finitely generated submodule $N$ of $M$ such that $\dim{\supp(M/N)} < n$.
Then, it is easy to see that $\ass(H_{\a}^g(M))_{\geq n} = \ass(H_{\a}^g(N))_{\geq n}$.
We may replace $M$ by $N$ and assume $M$ is finitely generated.
By Proposition \ref{46}, we have $\dim{\supp(H_{\a}^i(M))} < n$ for all $i < g$.
Hence $H_{\a}^i(M)$ is in dimension $< n$ for all $i < g$.
It follows from Proposition \ref{41} that $\ass(H_{\a}^g(M))_{\geq n}$ is finite.
\end{proof}
The following corollary is a generalization of \cite[Corollary 2.3]{BRS}.
\begin{cor}\label{48}
Let $\a$ and $\b$ be ideals of $R$ such that $\b \subseteq \a$, and let $M$ be in dimension $< n$. 
Assume that $\supp(M/{\a M})_{\geq n}$ is nonempty.
Set $g := \grade(\a,M)_{\geq n}$.
Then,
$$
f_{\a R_{\p}}^{\b R_{\p}} (M_\p) > g \text{ for all } \p\in\spec(R)_{\geq n} \quad \iff \quad f_{\a}^{\b}(M)_n > g.
$$
\end{cor}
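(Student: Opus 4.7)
First, I would observe that by Proposition \ref{46}, $g$ is the smallest degree $i$ with $\dim\supp(H_\a^i(M)) \ge n$. Consequently, for every $i < g$, both $\dim\supp(\b^t H_\a^i(M)) < n$ for all $t$, and $(H_\a^i(M))_\p = 0$ for every $\p \in \spec{R}_{\ge n}$ (since $\p \in \supp(H_\a^i(M))$ would force $\dim R/\p < n$). Hence both $f_\a^\b(M)_n \ge g$ and $f_{\a R_\p}^{\b R_\p}(M_\p) \ge g$ hold automatically, so the equivalence reduces to a single question at degree $i = g$: does some power $\b^t$ kill $H_\a^g(M)$ uniformly at all $\p$ with $\dim R/\p \ge n$?

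The implication $(\Leftarrow)$ is almost immediate from definitions: if $f_\a^\b(M)_n > g$, pick $t$ with $\dim\supp(\b^t H_\a^g(M)) < n$; then any $\p \in \spec{R}_{\ge n}$ lies outside $\supp(\b^t H_\a^g(M))$, so $(\b R_\p)^t H_{\a R_\p}^g(M_\p) = (\b^t H_\a^g(M))_\p = 0$, giving $f_{\a R_\p}^{\b R_\p}(M_\p) > g$.

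For $(\Rightarrow)$, the key input is Proposition \ref{47}: the set $\ass(H_\a^g(M))_{\ge n}$ is finite, say $\{\p_1, \ldots, \p_m\}$. Applying the hypothesis at each $\p_j$ yields $t_j \in \mathbb{N}$ with $(\b^{t_j} H_\a^g(M))_{\p_j} = 0$; set $t := \max_j t_j$ and $N := \b^t H_\a^g(M)$. Then $N_{\p_j} = 0$ for every $j$, while the inclusion $N \subseteq H_\a^g(M)$ yields $\ass(N) \subseteq \ass(H_\a^g(M))$; combining these forces $\ass(N)_{\ge n} = \emptyset$.

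The main obstacle is upgrading this vanishing of the ``top'' associated primes to the dimensional vanishing $\dim\supp(N) < n$, since $N$ need not be finitely generated. I would invoke the standard fact that over a Noetherian ring every minimal prime in the support of an arbitrary module is an associated prime (localize at such a $\p$; then $\supp_{R_\p}(N_\p) = \{\p R_\p\}$ and any nonzero module over a Noetherian ring admits associated primes, forcing $\p R_\p \in \ass_{R_\p}(N_\p)$, hence $\p \in \ass_R(N)$). Any hypothetical $\q \in \supp(N)_{\ge n}$ would contain some $\q_0 \subseteq \q$ minimal in $\supp(N)$, and $\q_0 \in \ass(N)$ with $\dim R/\q_0 \ge \dim R/\q \ge n$ would contradict $\ass(N)_{\ge n} = \emptyset$. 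Hence $\dim\supp(\b^t H_\a^g(M)) < n$, and $f_\a^\b(M)_n > g$.
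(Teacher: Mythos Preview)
Your proof is correct and follows essentially the same route as the paper's: both reduce to degree $g$ via Proposition \ref{46}, invoke Proposition \ref{47} to obtain finiteness of $\ass(H_\a^g(M))_{\ge n}$, and take the maximum of the finitely many exponents needed at those primes. You handle $(\Leftarrow)$ directly rather than citing Proposition \ref{100}, and you supply the justification (minimal support primes are associated) for the final implication $(\b^t H_\a^g(M))_{\p_j}=0 \Rightarrow \dim\supp(\b^t H_\a^g(M)) < n$, which the paper simply asserts.
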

\begin{proof}
In view of Proposition \ref{100}, it is enough to show that the first condition implies the second.
It follows from Proposition \ref{46} that $(H_{\a}^i(M))_{\p} = 0$ for all $\p\in\spec{R}_{\geq n}$ and for all $i < g$.
It suffices to prove that, if ${\b R_{\p}} \subseteq \sqrt{(0 :_{R_{\p}} H_{\a R_{\p}}^g(M_{\p}))}$ for all $\p\in\spec{R}_{\geq n}$, then $\dim{\supp({\b}^s H_{\a}^g(M))} < n$ for some $s\in\mathbb{N}$.
Now, $\ass(H_{\a}^g(M))_{\geq n}$ is finite by Proposition \ref{47}.
Set $\ass(H_{\a}^g(M))_{\geq n} = \{\p_1,\cdots\p_n\}$.
Then, by the assumption, there exists $s_{\p_i}$ such that $(\b R_{\p_i})^{s_{\p_i}}H_{\a R_{\p_i}}^g(M_{\p_i}) = 0$ for each $1 \leq i \leq n$.
Put $s = \max\{s_{\p_1},\cdots, s_{\p_n}\}$.
Then we have $({\b}^{s}H_{\a}^g(M))_{\p_i} = 0$ for all $1 \leq i \leq n$, and thus we obtain $\dim{\supp({\b}^s H_{\a}^g(M))} < n$.
\end{proof}
Corollary \ref{48} leads to the following corollary, which is a generalization of \cite[Corollary 2.4]{BRS}.
\begin{cor}\label{49}
Let $\a$ and $\b$ be ideals of $R$ such that $\b \subseteq \a$, and let $M$ be in dimension $ < n$.
Then $\ass(H_{\a}^1(M))_{\geq n}$ is finite, and 
$$
f_{\a R_{\p}}^{\b R_{\p}} (M_\p) > 1 \text{ for all } \p\in\spec(R)_{\geq n} \quad \iff \quad f_{\a}^{\b}(M)_n > 1.
$$
\end{cor}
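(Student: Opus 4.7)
The plan is to derive both assertions from Proposition \ref{41} and Corollary \ref{48} respectively, after the standard reduction via Remark \ref{14}(2).

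For the finiteness of $\ass(H_\a^1(M))_{\geq n}$, if $N \subseteq M$ is a finitely generated submodule with $\dim\supp(M/N) < n$, then $M_\p = N_\p$ for every $\p \in \spec(R)_{\geq n}$, and hence $\ass(H_\a^1(M))_{\geq n} = \ass(H_\a^1(N))_{\geq n}$. So I may assume $M$ is finitely generated. Then $H_\a^0(M) = \Gamma_\a(M)$ is itself finitely generated and thus trivially in dimension $<n$, and Proposition \ref{41} with $i = 1$ and $N = 0$ delivers the claim at once.

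For the equivalence, the direction $f_\a^\b(M)_n > 1 \Rightarrow f_{\a R_\p}^{\b R_\p}(M_\p) > 1$ is immediate from Proposition \ref{100}. For the converse, assume $f_{\a R_\p}^{\b R_\p}(M_\p) > 1$ for every $\p \in \spec(R)_{\geq n}$, and first reduce to $M$ finitely generated as above. Since $\b \subseteq \a$ gives $\Gamma_\a(M) \subseteq \Gamma_\b(M)$, the finitely generated module $\Gamma_\a(M)$ is annihilated by a power of $\b$; combined with $H_\a^i(\Gamma_\a(M)) = 0$ for $i \geq 1$ and $H_\a^0(M/\Gamma_\a(M)) = 0$, the long exact sequence shows that replacing $M$ by $M/\Gamma_\a(M)$ preserves both $f_\a^\b(M)_n$ and each $f_{\a R_\p}^{\b R_\p}(M_\p)$. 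So I may assume additionally that $\Gamma_\a(M) = 0$.

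Now put $g := \grade(\a, M)_{\geq n}$. If $\supp(M/\a M)_{\geq n} = \emptyset$, Proposition \ref{46} yields $\dim\supp H_\a^i(M) < n$ for all $i$, whence $f_\a^\b(M)_n = \infty$. Otherwise, for every $\p \in \supp(M/\a M)_{\geq n}$ one has $\Gamma_{\a R_\p}(M_\p) = (\Gamma_\a(M))_\p = 0$, which forces $\grade(\a R_\p, M_\p) \geq 1$ and hence $g \geq 1$. If $g \geq 2$, Proposition \ref{46} applied again gives $\dim\supp H_\a^i(M) < n$ for $i = 0, 1$, so $f_\a^\b(M)_n \geq 2$. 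The only substantive case is $g = 1$, and this is exactly the content of Corollary \ref{48}.

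The main obstacle I foresee is the reduction to $\Gamma_\a(M) = 0$: this is what rules out $g = 0$, the one value of the grade at which Corollary \ref{48} cannot be applied at our target threshold, and it is the only step that genuinely uses the hypothesis $\b \subseteq \a$.
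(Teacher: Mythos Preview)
Your proof is correct and follows essentially the same strategy as the paper: reduce to the $\a$-torsion-free case and invoke Corollary~\ref{48} once the grade is forced to be $1$. One small difference worth noting: for the finiteness of $\ass(H_\a^1(M))_{\geq n}$ you apply Proposition~\ref{41} directly (with $i=1$, $N=0$), using only that $H_\a^0(M)$ is finitely generated; the paper instead passes to $\overline{M}=M/\Gamma_\a(M)$, computes $\grade(\a,\overline{M})_{\geq n}=1$, and then goes through Proposition~\ref{47}. Your route is slightly shorter here, since Proposition~\ref{47} is itself just Proposition~\ref{41} plus a reduction, and you avoid the detour through the grade computation for this part.
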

\begin{proof}
Set $\overline{M} = M/\Gamma_{\a}(M)$.
Then, by \cite[Lemma 2.1.2 and Corollary 2.1.7 (iii)]{BS}, we have $H_{\a}^0(\overline{M}) = 0$ and $H_{\a}^1(M)\cong H_{\a}^1(\overline{M})$.
If $\dim{\supp(H_{\a}^1(M))} < n$, then we have $\ass(H_{\a}^1(M))_{\geq n} = \emptyset$.
Suppose that $\dim{\supp(H_{\a}^1(M))} \geq n$.
Then $\grade(\a, \overline{M})_{\geq n} = 1$ by Proposition \ref{46}.
Hence, by Proposition \ref{47}, we see that $\ass (H_{\a}^1(M))_{\geq n} = \ass (H_{\a}^1(\overline{M}))_{\geq n}$ is finite.
Next, we prove the latter assertion.
In view of Proposition \ref{100}, it is enough to show that the first condition implies the second.
Since $H_{\a}^0(M)$ is in dimension $ < n$, it suffices to prove that, if ${\b R_{\p}} \subseteq \sqrt{(0 :_{R_{\p}} H_{\a R_{\p}}^1(M_{\p}))}$ for all $\p\in\spec(R)_{\geq n}$, then $\dim{\supp({\b}^s H_{\a}^1(M))} < n$ for some $s\in\mathbb{N}$.
If $\dim{\supp(H_{\a}^1(M))} < n$, then it is clear.
If $\dim{\supp(H_{\a}^1(M))} \geq n$, then $\grade(\a, \overline{M})_{\geq n} = 1$ as we observed above.
Since $H_{\a}^1(M)\cong H_{\a}^1(\overline{M})$, we may replace $M$ by $\overline{M}$. 
Then the assertion follows from Corollary \ref{48}.
\end{proof}

The following theorem is a generalization of \cite[Theorem 2.6]{BRS}.
\begin{thm}\label{410}
Let $\a$ and $\b$ be ideals of $R$ such that $\b \subseteq \a$, and let $M$ be in dimension $ < n$.
Then,
$$
f_{\a R_{\p}}^{\b R_{\p}} (M_\p) > 2 \text{ for all } \p\in\spec(R)_{\geq n} \quad \iff \quad f_{\a}^{\b}(M)_n > 2.
$$
\end{thm}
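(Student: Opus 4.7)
$(\Leftarrow)$ is immediate from Proposition \ref{100}. For $(\Rightarrow)$, assume the local hypothesis. Corollary \ref{49} already yields $f_\a^\b(M)_n > 1$, so it suffices to produce $s \in \mathbb{N}$ with $\dim \supp(\b^s H_\a^2(M)) < n$. Using Remark \ref{14}(2), I reduce to $M$ finitely generated; replacing $M$ by $M/\Gamma_\a(M)$, which leaves $H_\a^2(M)$ unchanged, I may further assume $M$ is $\a$-torsion-free, so some $M$-regular element $y \in \a$ exists.

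Consider the short exact sequence $0 \to M \xrightarrow{y} M \to M/yM \to 0$ and its long exact sequence in local cohomology. At each $\p \in \spec{R}_{\geq n}$ the hypothesis provides $t_\p$ with $(\b R_\p)^{t_\p} H_{\a R_\p}^i(M_\p) = 0$ for $i \leq 2$, and chasing the localized long exact sequence yields $(\b R_\p)^{2 t_\p} H_{\a R_\p}^j((M/yM)_\p) = 0$ for $j \leq 1$; hence $f_{\a R_\p}^{\b R_\p}((M/yM)_\p) > 1$. Corollary \ref{49} applied to $M/yM$ then produces $s_1$ with $\dim \supp(\b^{s_1} H_\a^1(M/yM)) < n$, and since the long exact sequence for $M$ yields a surjection $H_\a^1(M/yM) \twoheadrightarrow (0 :_{H_\a^2(M)} y)$, we obtain $\dim \supp(\b^{s_1}(0 :_{H_\a^2(M)} y)) < n$.

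To upgrade control from $(0 :_{H_\a^2(M)} y)$ to all of $H_\a^2(M)$, I first establish $\ass(H_\a^2(M))_{\geq n} \subseteq \ass((0 :_{H_\a^2(M)} y))_{\geq n}$. Since $H_\a^2(M)$ is $\a$-torsion and $y \in \a$, every element of $H_\a^2(M)$ is annihilated by a power of $y$, so $H_\a^2(M) = \bigcup_k (0 :_{H_\a^2(M)} y^k)$. Multiplication by $y^k$ induces an injection $(0 :_{H_\a^2(M)} y^{k+1}) / (0 :_{H_\a^2(M)} y^k) \hookrightarrow (0 :_{H_\a^2(M)} y)$; induction on $k$ then gives $\ass((0 :_{H_\a^2(M)} y^k)) \subseteq \ass((0 :_{H_\a^2(M)} y))$ for all $k$, and taking the union yields the desired inclusion.

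The principal remaining difficulty is to show $\ass((0 :_{H_\a^2(M)} y))_{\geq n}$ is finite. My plan is to apply Proposition \ref{41} to $M/yM$ with $i = 1$ and $N = H_\a^1(M)/y H_\a^1(M)$, which is the kernel of the surjection $H_\a^1(M/yM) \twoheadrightarrow (0 :_{H_\a^2(M)} y)$; this reduces the task to verifying that $N$ is in dimension $<n$. I expect to establish this via an iterative use of Lemma \ref{53} on $H_\a^1(M)$, leveraging the finiteness of $\ass(H_\a^1(M))_{\geq n}$ from Corollary \ref{49} together with the local control supplied by the hypothesis, and this is the crux of the argument. Once finiteness of $\ass((0 :_{H_\a^2(M)} y))_{\geq n}$ is established, so is that of $\ass(H_\a^2(M))_{\geq n}$; writing the latter as $\{\p_1, \ldots, \p_m\}$ and taking $s \geq \max_i t_{\p_i}$ makes $(\b^s H_\a^2(M))_{\p_i} = 0$ for each $i$. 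As $\ass(\b^s H_\a^2(M))_{\geq n} \subseteq \{\p_1, \ldots, \p_m\}$, this set is forced to be empty, and therefore $\dim \supp(\b^s H_\a^2(M)) < n$, completing the proof.
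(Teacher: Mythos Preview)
Your overall architecture is sound, but the step you yourself flag as the crux is a genuine gap. Iterating Lemma \ref{53} on $H_\a^1(M)$ in the style of Proposition \ref{43} requires that $H_\a^1(M)_\p$ be a finitely generated $R_\p$-module for every $\p\in\spec{R}_{\geq n}$ that appears along the way; the hypothesis $(\b R_\p)^{t_\p}H_{\a R_\p}^1(M_\p)=0$ does not by itself deliver this. The difficulty stems from your reduction: passing to $M/\Gamma_\a(M)$ gives a regular element $y\in\a$, but since $\b\subseteq\a$ one has $\Gamma_\a(M)\subseteq\Gamma_\b(M)$, so the resulting module need not be $\b$-torsion-free and you have no leverage on $y$ relative to $\b$.

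The paper closes this by replacing $M$ with $M/\Gamma_\b(M)$ (Remark \ref{14}(2)) and choosing the regular element $y\in\b$. Then $y^{s_\p}\in\b^{s_\p}$ actually annihilates $H_{\a R_\p}^1(M_\p)$, so the long exact sequence attached to $0\to M\xrightarrow{y^{s_\p}} M\to M/y^{s_\p}M\to 0$ exhibits $H_{\a R_\p}^1(M_\p)$ as a quotient of the finitely generated module $H_{\a R_\p}^0(M_\p/y^{s_\p}M_\p)$. This gives $f_\a(M)^n>1$, whence $H_\a^1(M)$ is in dimension $<n$ by Proposition \ref{43}, and a single application of Proposition \ref{41} with $N=0$ yields the finiteness of $\ass(H_\a^2(M))_{\geq n}$ directly. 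With that fix available, your detour through $M/yM$, the inclusion $\ass(H_\a^2(M))_{\geq n}\subseteq\ass((0:_{H_\a^2(M)}y))_{\geq n}$, and the attempt to control $H_\a^1(M)/yH_\a^1(M)$ all become unnecessary.
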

\begin{proof}
In view of Proposition \ref{100}, it is enough to show that the first condition implies the second.
Suppose that $f_{\a R_{\p}}^{\b R_{\p}} (M_\p) > 2$ for all $ \p\in\spec{R}_{\geq n}$.
We must show that $f_{\a}^{\b}(M)_n > 2$.
In view of Remark \ref{14}, we may assume that $M$ is a finitely generated $\b$-torsion-free $R$-module.
Then there exists $y \in\b$ such that $y$ is $M$-regular by \cite[Lemma 2.1.1(ii)]{BS}.
By Corollary \ref{49}, we have $f_{\a}^{\b}(M)_n > 1$.
Hence it suffices to prove that $\dim{\supp(\b^s H_{\a}^2(M))} < n$ for some $s\in\mathbb{N}$.
By hypothesis, there exists $s_{\p}\in\mathbb{N}$ such that $(\b R_{\p})^{s_{\p}} H_{\a R_{\p}}^2(M_{\p}) = 0$ for each $\p\in\spec{R}_{\geq n}$.
The exact sequence $0\rightarrow M \xrightarrow{y^{s_{\p}}} M \to M/{y^{s_{\p}}M} \to 0$ yields the exact sequence
$$
H_{\a R_{\p}}^0(M_{\p}/{y^{s_{\p}}M_{\p}}) \to H_{\a R_{\p}}^1(M_{\p}) \xrightarrow{y^{s_{\p}}} H_{\a R_{\p}}^1(M_{\p}).
$$
From this exact sequence, we see that $H_{\a R_{\p}}^1(M_{\p})$ is finitely generated for all $\p\in\spec{R}_{\geq n}$, and hence $f_{\a}(M)^n > 1$.
Thus $H_{\a}^1(M)$ is in dimension $ < n$ by Proposition \ref{43}, and hence $\ass(H_{\a}^2(M))_{\geq n}$ is finite by Proposition \ref{41}.
Put $\ass(H_{\a}^2(M))_{\geq n} = \{\p_1,\cdots,\p_m\}$ and $s = \max\{s_{\p_1},\cdots,s_{\p_m}\}$.
Then we get $(\b^{s} H_{\a}^2(M))_{\p_i} = 0$ for all $1\leq i \leq m$, and this implies that $\dim{\supp(\b^s H_{\a}^2(M))} < n$.
\end{proof}
To state the next results, we introduce the following notation, which is a generalization of \cite[Definition 3.1]{BRS}.
\begin{dfn}\label{411}
Let $r\in\mathbb{N}$.
Let $\b$ be a ideal of $R$, and let $M$ be in dimension $ < n$.
We set
$$
\Pi_{n}^{\b}(M;r):= \{\a\mid \b \subseteq \a, f_{\a}^{\b}(M)_n \leq r \text{ and } f_{\a R_{\p}}^{\b R_{\p}} (M_\p) > r \text{ for all }  \p\in\spec{R}_{\geq n}\}.
$$
Note that if $\q\in\Pi_{n}^{\b}(M;r)$, then $\dim{R/\q} \geq n$.
\end{dfn}

The following lemma is a generalization of \cite[Lemma 3.2]{BRS}.
\begin{lem}\label{412}
Let $r\in\mathbb{N}$.
Let $\b$ be a ideal of $R$, and let $M$ be in dimension $ < n$.
Assume that the set $\Pi_{n}^{\b}(M;r)$ is not empty, and let $\q$ be a maximal member of $\Pi_{n}^{\b}(M;r)$ (with respect to inclusion). 
Then $\q$ is a non-maximal, prime ideal in $\supp(M)_{\geq n}$.
\end{lem}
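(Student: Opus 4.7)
The plan is to establish the three properties---primality, membership in $\supp{M}$, and non-maximality---separately, always exploiting the maximality of $\q$ in $\Pi_{n}^{\b}(M;r)$ to produce a strictly larger member of this set, or else to derive a direct contradiction. First I would reduce to the case where $\q$ is radical: since local cohomology depends only on the radical of the ideal, $H_{\sqrt{\q}}^i(M)=H_{\q}^i(M)$ and the same holds after localizing at any prime, so $\sqrt{\q}$ still belongs to $\Pi_{n}^{\b}(M;r)$; maximality then forces $\q=\sqrt{\q}$, and we may write $\q=\p_1\cap\cdots\cap\p_m$ as the intersection of its distinct minimal primes.

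For primality, suppose for contradiction that $m\ge 2$ and split $\q=\a_1\cap\a_2$ with $\a_1:=\p_1$ and $\a_2:=\p_2\cap\cdots\cap\p_m$, so that $\a_1,\a_2\supsetneq\q$. Maximality of $\q$ excludes $\a_1$, $\a_2$, and $\a_1+\a_2$ from $\Pi_{n}^{\b}(M;r)$; combined with Remark~\ref{14}(1) and the condition $f_{\q R_\p}^{\b R_\p}(M_\p)>r$ for all $\p\in\spec{R}_{\ge n}$ coming from $\q\in\Pi_{n}^{\b}(M;r)$, the alternative that some $\p\in\spec{R}_{\ge n}$ satisfies $f_{\a_i R_\p}^{\b R_\p}(M_\p)\le r$ is ruled out, so $f_{\a_1}^{\b}(M)_n$, $f_{\a_2}^{\b}(M)_n$, and $f_{\a_1+\a_2}^{\b}(M)_n$ are all $>r$. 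The Mayer-Vietoris exact sequence
\[
\cdots\to H_{\a_1+\a_2}^i(M)\to H_{\a_1}^i(M)\oplus H_{\a_2}^i(M)\to H_{\q}^i(M)\to H_{\a_1+\a_2}^{i+1}(M)\to\cdots
\]
then yields, by a uniform sandwich argument localized at each $\p\in\spec{R}_{\ge n}$, some $t'$ with $\dim\supp(\b^{t'}H_{\q}^i(M))<n$ for every $i<r$. For the residual case $i=r$, the right-hand term $H_{\a_1+\a_2}^{r+1}(M)$ is outside our direct reach, so the strategy shifts: I would use Proposition~\ref{41}---whose hypothesis that $H_{\q}^j(M)$ is in dimension $<n$ for $j<r$ I plan to verify by combining the preceding smallness output with Proposition~\ref{43}---to deduce that $\ass(H_{\q}^r(M))_{\ge n}$ is finite, and then convert the pointwise vanishing $(\b R_\p)^{t_\p}H_{\q R_\p}^r(M_\p)=0$ (guaranteed by $\q\in\Pi_{n}^{\b}(M;r)$) into a uniform $t$ by taking a maximum over this finite set. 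This gives $f_{\q}^{\b}(M)_n>r$, contradicting $\q\in\Pi_{n}^{\b}(M;r)$.

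For $\q\in\supp{M}$, by Remark~\ref{14}(2) I may assume $M$ is finitely generated; if $\q\notin\supp{M}$ then $M_\q=0$, so there exists $u\in R\setminus\q$ with $uM=0$, and the ideal $\a:=\q+Ru\supsetneq\q$ satisfies $H_{\a}^i(M)=H_{\q}^i(M)$ globally and after any localization (since $u$ annihilates $M$). Hence $\a\in\Pi_{n}^{\b}(M;r)$, contradicting maximality. For non-maximality, if $\q$ were a maximal ideal then $\dim{R/\q}\ge n$ forces $n=0$; since $H_{\q}^i(M)$ is a $\q$-torsion $R$-module and $\q$ is maximal, every $s\in R\setminus\q$ acts as a unit on $H_{\q}^i(M)$, so the natural map $H_{\q}^i(M)\to H_{\q R_\q}^i(M_\q)$ is an isomorphism and $f_{\q}^{\b}(M)=f_{\q R_\q}^{\b R_\q}(M_\q)$; this contradicts the pair $f_{\q}^{\b}(M)\le r<f_{\q R_\q}^{\b R_\q}(M_\q)$ forced by $\q\in\Pi_{n}^{\b}(M;r)$ applied at $\p=\q$.

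The main obstacle is the $i=r$ step in the primality argument: promoting the pointwise annihilation $(\b R_\p)^{t_\p}H_{\q R_\p}^r(M_\p)=0$ to a uniform-in-$\p$ smallness statement for the support. Proposition~\ref{41} supplies the right tool, but its hypothesis ``$H_{\q}^j(M)$ is in dimension $<n$ for $j<r$'' is strictly stronger than the sandwich conclusion ``$\dim\supp(\b^{t'}H_{\q}^j(M))<n$'', so a careful combination of the $\q$-torsion structure of $H_{\q}^j(M)$ with Proposition~\ref{43} (and, if necessary, a bootstrap via Corollary~\ref{49} or Theorem~\ref{410}) will be needed to close this gap.
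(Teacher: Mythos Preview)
Your arguments for $\q\in\supp(M)$ and for non-maximality are essentially those of the paper and are fine. The substantive issue is the primality step, and the gap you yourself flag at $i=r$ is genuine and does not close along the lines you sketch.

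Proposition~\ref{41} requires that $H_\q^j(M)$ itself be in dimension $<n$ for all $j<r$; your Mayer--Vietoris sandwich for $j<r$ only yields $\dim\supp\bigl(\b^{t'}H_\q^j(M)\bigr)<n$ for some $t'$, which is strictly weaker, and the $\b$-torsion nature of $H_\q^j(M)$ does not bridge the two. Your fallback, Proposition~\ref{43}, needs $j<f_\q(M)^n=f_\q^\q(M)^n$, whereas membership in $\Pi_n^\b(M;r)$ only gives $f_\q^\b(M)^n>r$; since $\b\subseteq\q$ one has $f_\q^\q(M)^n\le f_\q^\b(M)^n$, so the inequality points the wrong way and no lower bound on $f_\q(M)^n$ follows. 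The bootstraps of Corollary~\ref{49} and Theorem~\ref{410} are specific to $r\le 2$. In short, the Mayer--Vietoris route forces you to control $H_{\a_1+\a_2}^{r+1}(M)$, and nothing in the hypotheses does that.

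The paper avoids this degree shift entirely by using the localization exact sequence instead of Mayer--Vietoris. Pick $s\in(\p_2\cap\cdots\cap\p_m)\setminus\p_1$; then $\q+Rs$ and $\p_1$ strictly contain $\q$, so maximality together with Remark~\ref{14}(1) gives $f_{\q+Rs}^\b(M)_n>r$ and $f_{\p_1}^\b(M)_n>r$. Since $\sqrt{\q R_s}=\p_1 R_s$, one obtains $f_\q^\b(M_s)_n=f_{\p_1 R_s}^{\b R_s}(M_s)_n\ge f_{\p_1}^\b(M)_n>r$. Now the exact sequence
\[
\cdots\longrightarrow H_{\q+Rs}^i(M)\longrightarrow H_\q^i(M)\longrightarrow H_\q^i(M_s)\longrightarrow\cdots
\]
sandwiches $H_\q^i(M)$ between two terms \emph{at the same cohomological degree}, both of which are controlled for every $i\le r$; a single uniform power of $\b$ then handles all $i\le r$ at once, giving $f_\q^\b(M)_n>r$ and the desired contradiction, with no appeal to finiteness of associated primes.
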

\begin{proof}
Since $M$ is in dimension $ < n$, there exists a finitely generated submodule $N$ of $M$ such that $\dim{\supp(M/N)} < n$.
Then it is easy to see that $\Pi_{n}^{\b}(M;r) = \Pi_{n}^{\b}(N;r)$ and $\supp(M)_{\geq n} = \supp(N)_{\geq n}$.
Hence, we may replace $M$ by $N$ and assume $M$ is finitely generated.
Since $H_{\q}^i = H_{\sqrt{\q}}^i$ for all $i\in\mathbb{N}$, we have $\q = \sqrt{\q}$.
Also, since $H_{\q}^i(M) \cong H_{\q + (0:_R M)}^i(M)$ by \cite[Theorem 4.2.1]{BS}, we have $\q = \q + (0:_R M)$, so that $\q \supseteq (0:_R M)$.
First, we prove that $\q$ is a prime ideal.
Suppose on the contrary, that $\q$ is not a prime ideal, and we look for a contradiction.
Let $\p_1,\cdots,\p_t$ be the distinct minimal primes of $\q$.
Note that $t >1$ since $\q$ is not prime.
Let $s\in (\p_2\cap\cdots\cap\p_t)\setminus\p_1$.
Since $\q\in\Pi_{n}^{\b}(M;r)$, we have $f_{\q R_{\p}}^{\b R_{\p}} (M_\p) > r$ for all $ \p\in\spec{R}_{\geq n}$.
Hence, by Remark \ref{14} (1), we get $f_{(\q + Rs) R_{\p}}^{\b R_{\p}} (M_\p) > r$ and $f_{\p_1 R_{\p}}^{\b R_{\p}} (M_\p) > r$ for all $ \p\in\spec{R}_{\geq n}$.
As $\q \subsetneq \q + Rs$ and $\q \subsetneq \p_1$, we have $\q + Rs,\p_1 \notin \Pi_{n}^{\b}(M;r)$ from the maximality
of $\q$.
Hence we get $f_{\q + Rs}^{\b}(M)_n > r$ and $f_{\p_1}^{\b}(M)_n > r$.
Moreover, we have
$$
f_{\q}^{\b}(M_s)_n
=f_{\q R_s}^{\b R_s}(M_s)_n
=f_{\p_1 R_s}^{\b R_s}(M_s)_n
\geq f_{\p_1}^{\b}(M)_n
> r,
$$
where the first equality follows from \cite[4.2.1]{BS} and the second holds since $\sqrt{\q R_s} = \p_1 R_s$.
Hence the exact sequence $\cdots\to H_{\q + Rs}^i(M) \to H_{\q}^i(M) \to H_{\q}^i(M_s)\to \cdots$ yields $f_{\q}^{\b}(M)_n > r$, and this contradicts the fact that $\q\in\Pi_{n}^{\b}(M;r)$.
Thus $\q$ is prime, and so $\q \in \V((0:_R M))_{\geq n} = \supp(M)_{\geq n}$.
Next, we show that $\q$ is not a maximal ideal.
Suppose on the contrary, that $\q$ is a maximal ideal, and we look for a contradiction.
Since $\dim{R/\q} \geq n$, we only need to show that in the case $n = 0$.
Fix $t\in\mathbb{N}$.
Suppose that $(\b R_{\q})^{t}H_{\q R_{\q}}^i(M_{\q}) = 0$.
Then, since $\supp({\b}^t H_{\q}^i(M)) \subseteq \V(\q) = \{\q\}$, we have $\supp({\b}^t H_{\q}^i(M)) = \phi$.
Hence we get ${\b}^t H_{\q}^i(M) = 0$.
Thus we see that $f_{\q}^{\b}(M) = f_{\q R_{\q}}^{\b R_{\q}}(M_{\q})$, and this contradicts the fact that $\q\in\Pi_0^{\b}(M;r)$.
\end{proof}
By using Lemma \ref{412}, we can prove the following theorem, which is a generalization of \cite[Theorem 3.10]{BRS}.
\begin{thm}\label{413}
Assume that $\dim{R}$ is finite.
Let $r\in\mathbb{N}$ be such that $r \geq \dim{R} - 1$.
Let $\a$ and $\b$ be ideals of $R$ such that $\b \subseteq \a$, and let $M$ be in dimension $< n$.
Then,
$$
f_{\a R_{\p}}^{\b R_{\p}} (M_\p) > r \text{ for all } \p\in\spec(R)_{\geq n} \quad \iff \quad f_{\a}^{\b}(M)_n > r.
$$
\end{thm}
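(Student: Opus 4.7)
By Proposition \ref{100}, one direction is automatic, so the task is to show that $f_{\a R_{\p}}^{\b R_{\p}}(M_\p) > r$ for all $\p \in \spec(R)_{\geq n}$ implies $f_{\a}^{\b}(M)_n > r$. Assume for contradiction this fails; then $\Pi_n^{\b}(M;r)$ is nonempty and, by Noetherianity of $R$, admits a maximal element $\q$. Lemma \ref{412} identifies $\q$ as a non-maximal prime of $\supp(M)_{\geq n}$, and the hypothesis $r \geq \dim R - 1$ enters decisively via $\height \q \leq \dim R - 1 \leq r$.

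From the maximality of $\q$, Remark \ref{14}(1) combined with the fact that the local hypothesis $f_{\q R_{\p}}^{\b R_{\p}}(M_\p) > r$ is inherited by any ideal containing $\q$ forces $f_{\q'}^{\b}(M)_n > r$ for every prime $\q' \supsetneq \q$, and similarly $f_{\q + Rs}^{\b}(M)_n > r$ for every $s \in R \setminus \q$ with $\q + Rs \neq R$. Via Remark \ref{14}(2) I reduce to the case where $M$ is finitely generated and $\b$-torsion-free, and pick an $M$-regular element $y \in \b$ from \cite[Lemma 2.1.1(ii)]{BS}.

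The heart of the proof is an induction on $i$ with $0 \leq i \leq r$ establishing (a) $H_\q^j(M)$ is in dimension $<n$ for all $j < i$, and (b) there exists $t_i \in \mathbb{N}$ with $\dim \supp(\b^{t_i} H_\q^i(M)) < n$. Assuming (a), Proposition \ref{41} (applied with $N = 0$) makes $\ass(H_\q^i(M))_{\geq n}$ finite, and the local hypothesis applied at each prime in this finite set furnishes a uniform annihilator exponent $t_i$, proving (b). To propagate (a) from level $j < i$ to level $j = i$, I use the regular-element exact sequence
$$ H_\q^{i-1}(M/y^{t_i}M)_\p \longrightarrow H_\q^i(M)_\p \xrightarrow{y^{t_i}} H_\q^i(M)_\p, $$
whose right-hand map is zero since $y^{t_i} \in \b^{t_i}$ kills the target. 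A secondary induction on cohomological degree, pivoting on the long exact sequence from $0 \to M \xrightarrow{y^{t_i}} M \to M/y^{t_i}M \to 0$, then upgrades the $\b R_\p$-torsion condition to finite generation of $(H_\q^i(M))_\p$ for every $\p \in \spec(R)_{\geq n}$; this gives $f_{\q R_\p}(M_\p) > i$, and Proposition \ref{43} places $H_\q^i(M)$ in dimension $<n$. Running the induction to $i = r$ yields $f_{\q}^{\b}(M)_n > r$, contradicting $\q \in \Pi_n^{\b}(M;r)$.

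The main obstacle I anticipate is the secondary induction that upgrades local-cohomology $\b R_\p$-torsion to finite generation: this step is what converts the hypothesis (phrased via $f^{\b}$) into a statement usable by Proposition \ref{43} (phrased via $f$). The bound $\height \q \leq r$ delivered by Lemma \ref{412} together with $r \geq \dim R - 1$ is precisely what keeps the secondary induction within the range of indices for which the original hypothesis supplies usable annihilation data.
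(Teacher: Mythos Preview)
Your approach diverges from the paper's at the decisive step. The paper does not attempt any cohomological induction: once $\q$ is identified (via Lemma~\ref{412}) as a non-maximal prime in $\supp(M)_{\geq n}$, it passes to the invariant $\lambda$ via $f_{\q R_\p}^{\b R_\p}(M_\p) \le \lambda_{\q R_\p}^{\b R_\p}(M_\p)$ to obtain $\lambda_\q^\b(M)_n > r$, then picks $\p \in \ass(M)$ with $\p \subseteq \q$ and $\p \notin \V(\b)$ (possible since $M$ is $\b$-torsion-free) and computes
\[
\height \q \;\ge\; \height(\q/\p) \;=\; \depth M_\p + \height(\q/\p) \;\ge\; \lambda_{\q R_\q}^{\b R_\q}(M_\q) \;\ge\; \lambda_\q^\b(M)_n \;\ge\; r+1 \;\ge\; \dim R,
\]
forcing $\q$ to be maximal --- the contradiction. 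No induction on $i$, no Proposition~\ref{43}.

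Your inductive scheme has a genuine gap at the ``secondary induction.'' To invoke Proposition~\ref{43} you need $f_{\q R_\p}(M_\p) > i$, i.e.\ $(H_\q^i(M))_\p$ finitely generated; the hypothesis gives only that some power of $\b R_\p$ annihilates it, and the implication \emph{annihilated by a power of $\b$} $\Rightarrow$ \emph{finitely generated} is false in general (nothing prevents $\b$ from being strictly smaller than $\q$). Your exact sequence reduces finite generation of $(H_\q^i(M))_\p$ to that of $(H_\q^{i-1}(M/y^{t_i}M))_\p$, but the same long exact sequence shows the latter surjects onto the former with finitely generated kernel, so the two finiteness questions are equivalent and no progress is made. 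Iterating fails because $y$ is not regular on $M/y^{t_i}M$, and replacing by the $\b$-torsion-free quotient does not help: the $\b$-torsion part, though finitely generated, can have non-finitely-generated $H_\q^k$ when $\b \subsetneq \q$. I do not see how the bound $\height\q \le r$ closes this circularity; the paper's route through $\lambda$ sidesteps the issue entirely.
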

\begin{proof}
In view of Proposition \ref{100}, it is enough to show that the first condition implies the second.
We suppose that the set $\Pi_{n}^{\b}(M;r)$ is not empty, and look for a contradiction.
In view of Remark \ref{14} (2), we may assume that $M$ is a finitely generated $\b$-torsion-free $R$-module.
Then there exists $x\in\b$ such that $y$ is $M$-regular by \cite[Lemma 2.1.1(ii)]{BS}.
Let $\q$ be a maximal member of $\Pi_{n}^{\b}(M;r)$.
By Lemma \ref{412}, $\q$ is non-maximal, prime ideal in $\supp(M)_{\geq n}$.
Since $\q\in\Pi_{n}^{\b}(M;r)$, we have $f_{\q R_{\p}}^{\b R_{\p}} (M_\p) > r$ for all $ \p\in\spec{R}_{\geq n}$.
Hence we get
\begin{align*}
\lambda_{\q}^{\b}(M)_n
&= \inf\{\lambda_{\q R_\p}^{\b R_\p}(M_\p) \mid \p\in\spec{R}_{\geq n} \}\\
&\geq \inf\{f_{\q R_\p}^{\b R_\p}(M_\p) \mid \p\in\spec{R}_{\geq n}\}\\
&> r.
\end{align*}
Since $\q\in\supp(M)_{\geq n}$, there exists $\p\in\ass(M)_{\geq n}$ such that $\p \subseteq \q$, and since $x\in\b$ is a non-zero divisor on $M$, we must have $\p\notin\V(\b)$.
Therefore, we obtain
$$
\height{\q} \geq \height{\q/\p} = \depth (M_\q)_{\p R_\q} + \height{\q R_\q/\p R_\q} = \lambda_{\q R_\p}^{\b R_\q}(M_\q) \geq \lambda_{\q}^{\b}(M)_n \geq r + 1 \geq \dim{R}.
$$
Since $\q$ is not a maximal ideal of $R$, this is a contradiction.
\end{proof}

Now, we can state and prove Theorem \ref{13}, which is the main result in this section.
This result is a generalization of \cite[Corollary 3.12]{BRS}.
\begin{cor}\label{414}
Let $R$ be a ring such that $\dim{R} \leq 4$.
Let $\a$ and $\b$ be ideals of $R$ such that $\b \subseteq \a$, and let $M$ be in dimension $< n$.
Then,
$$
f_\a^\b(M)_n = f_\a^\b(M)^n
$$
\end{cor}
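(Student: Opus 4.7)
The plan is to reduce Corollary \ref{414} to the equivalences already established in Corollary \ref{49}, Theorem \ref{410}, and Theorem \ref{413}. By Proposition \ref{100} one has $f_\a^\b(M)_n \le f_\a^\b(M)^n$ unconditionally, so only the reverse inequality is at stake. This reverse inequality is equivalent to the family of implications
\[
\bigl(\, f_{\a R_\p}^{\b R_\p}(M_\p) > r \text{ for all } \p \in \spec(R)_{\ge n} \,\bigr) \;\Longrightarrow\; f_\a^\b(M)_n > r
\]
indexed by $r \in \mathbb{N}$, since taking the infimum over $\p \in \spec(R)_{\ge n}$ on the left converts these uniformly into $f_\a^\b(M)^n \le f_\a^\b(M)_n$.

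Next I would dispatch the implication case by case in $r$. For $r = 0$ the claim is vacuous, since every invariant occurring here is a positive integer or $\infty$ and so automatically exceeds $0$. For $r = 1$ the implication is exactly Corollary \ref{49}, and for $r = 2$ it is Theorem \ref{410}. For $r \ge 3$ the standing hypothesis $\dim R \le 4$ gives $r \ge \dim R - 1$, so Theorem \ref{413} supplies the implication. Combining the cases yields the desired equality $f_\a^\b(M)_n = f_\a^\b(M)^n$.

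No new obstacle truly arises: the substantive work sits in the three previously proved results, and the role of the dimension hypothesis is purely to make sure the case analysis closes. The one step requiring a moment of thought is checking that $\{0,1,2\} \cup \{r : r \ge \dim R - 1\}$ covers all of $\mathbb{N}$; this holds precisely because $\dim R - 1 \le 3$. The genuinely nontrivial part of the chain --- and the place where the dimension bound is actually consumed --- is the proof of Theorem \ref{413}, which has already been carried out above, so assembling Corollary \ref{414} is a matter of bookkeeping.
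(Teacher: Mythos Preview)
Your proposal is correct and follows essentially the same approach as the paper, which simply records that the result is an immediate consequence of Corollary \ref{49}, Theorem \ref{410}, and Theorem \ref{413}. You have merely made explicit the case split in $r$ and the role of Proposition \ref{100} that the paper leaves to the reader.
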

\begin{proof}
This is an immediate consequence of Corollary \ref{49} and Theorems \ref{410}, \ref{413}.
\end{proof}
To show our next result, we state a remark and a lemma.
\begin{rem} \label{999}
Let $\a$ and $\b$ be ideals of $R$ such that $\b \subseteq \a$, and let $M$ be a finitely genrated $R$-module.
If $\spec{R} \setminus \V(\b) = \emptyset$, then there exists $n\in\mathbb{N}$ such that $\b^n = 0$.
Hence we get $f_{\a}^{\b}(M) = \infty$.
\end{rem}

\begin{lem} \label{415}
Let $(R,\m)$ be a local ring.
Let $\a$ and $\b$ be ideals of $R$ such that $\b \subseteq \a$, and let $M$ be a finitely generated $R$-module.
Then $\lambda_{\a}^{\b}(M) = \infty$ implies $f_{\a}^{\b}(M) = \infty$.
\end{lem}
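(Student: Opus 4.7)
The plan is to reduce to showing that $\b^t M = 0$ for some $t \in \mathbb{N}$, which immediately forces $\b^t H_\a^i(M) = 0$ for every $i \geq 0$ and hence $f_\a^\b(M) = \infty$. To get there, I will analyze the finiteness of the quantity $\depth M_\p + \height(\a+\p)/\p$ prime by prime.

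First I would dispose of the degenerate cases. If $\a = R$, then $H_\a^i(M) = 0$ for all $i$, so $f_\a^\b(M) = \infty$ trivially, and there is nothing to prove. If $\spec R \setminus \V(\b) = \emptyset$, then Remark \ref{999} gives the conclusion directly. So I may assume from now on that $\a \subsetneq R$ and that there is at least one prime not in $\V(\b)$.

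The key observation is that, since $(R,\m)$ is local and $\a \subseteq \m$, for every prime $\p \in \spec R$ we have $\a + \p \subseteq \m \subsetneq R$, so the ideal $(\a+\p)/\p$ of the Noetherian local domain $R/\p$ is proper, and its height is bounded above by $\dim(R/\p) \leq \dim R < \infty$. Consequently the summand $\height(\a+\p)/\p$ in the definition of $\lambda_\a^\b(M)$ is always finite. Therefore the hypothesis $\lambda_\a^\b(M) = \infty$ forces $\depth M_\p = \infty$ (i.e.\ $M_\p = 0$) for every $\p \in \spec R \setminus \V(\b)$. Equivalently, $\supp M \subseteq \V(\b)$.

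Finally, since $M$ is finitely generated and $\supp M \subseteq \V(\b)$, standard commutative algebra yields an integer $t \in \mathbb{N}$ with $\b^t M = 0$. Multiplication by any element of $\b^t$ then acts as zero on $M$ and hence, by functoriality, as zero on every $H_\a^i(M)$, giving $\b^t H_\a^i(M) = 0$ for all $i$. This shows $f_\a^\b(M) = \infty$. There is no real obstacle here; the only subtle point is ensuring that the ``$\height(\a+\p)/\p = \infty$'' possibility cannot occur, which is exactly why locality of $R$ (together with $\a \neq R$) is used.
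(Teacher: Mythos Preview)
Your proof is correct and follows essentially the same route as the paper's own argument: reduce away the cases $\a = R$ and $\spec R \setminus \V(\b) = \emptyset$, use locality of $R$ to force $\height(\a+\p)/\p < \infty$, conclude $\supp M \subseteq \V(\b)$, and then use finite generation of $M$ to find a power of $\b$ annihilating $M$ and hence all $H_\a^i(M)$.
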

\begin{proof}
In view of Remark \ref{999}, we may assume that $\spec{R} \setminus \V(\b) \neq \emptyset$.
If $\a = R$, then $f_{R}^{\b}(M) = \infty$.
Let $\a\neq R$.
Suppose that $\lambda_{\a}^{\b}(M) = \infty$. 
Then we have $ \depth M_{\p} + \height(\a + \p)/{\p} = \infty$ for all $\p\in\spec{R}\setminus\V(\b)$.
As $\a + \p \subseteq \m$, we get $\height(\a + \p)/\p < \infty$.
Hence, we have $\depth M_{\p} = \infty$, and $M_{\p} = 0$.
Thus, we see that $\supp(M) \subseteq \V(\b)$ and hence there exists $n\in\mathbb{N}$ such that $\b^n \subseteq (0:_R M)$.
We obtain $\b^n H_{\a}^i(M) = 0$ for all $i\in\mathbb{N}$, and hence $f_{\a}^{\b}(M) = \infty$.
\end{proof}
Finally, we prove Corollary \ref{140}, which states that if $R$ is a ring such that $\dim{R} \leq 2$, then the assertion of Corollary \ref{26} (2) always holds.
\begin{cor}
Let $R$ be a ring such that $\dim{R} \leq 2$.
Let $\a$ and $\b$ be ideals of $R$ such that $\b \subseteq \a$, and let $M$ be in dimension $< n$.
Then,
$$f_\a^\b(M)_n \geq \lambda_\a^\b(M)_n - 1.
$$
\end{cor}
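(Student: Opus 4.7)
The plan is to combine the local--global principle of Theorem \ref{13} (that is, Corollary \ref{414}) with a pointwise dimension count on each localization $R_\p$. First I would apply Corollary \ref{414} to write
$$
f_\a^\b(M)_n = f_\a^\b(M)^n = \inf\{f_{\a R_\p}^{\b R_\p}(M_\p) \mid \p\in\spec{R}_{\geq n}\},
$$
while $\lambda_\a^\b(M)_n$ is by definition the analogous infimum of $\lambda_{\a R_\p}^{\b R_\p}(M_\p)$ over the same index set. Hence it suffices to prove, for every fixed $\p\in\spec{R}_{\geq n}$, the local inequality
$$
f_{\a R_\p}^{\b R_\p}(M_\p) \geq \lambda_{\a R_\p}^{\b R_\p}(M_\p) - 1;
$$
taking infima then transfers the estimate to the global invariants.

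Next I would fix such a $\p$ and work inside the local ring $R_\p$, which has $\dim R_\p \leq \dim R \leq 2$. Because $M$ is in dimension $<n$ and $\dim R/\p \geq n$, the localization $M_\p$ is a finitely generated $R_\p$-module. If $\lambda_{\a R_\p}^{\b R_\p}(M_\p) = \infty$, then Lemma \ref{415} applied with $(R_\p,\p R_\p)$ in place of $(R,\m)$ immediately yields $f_{\a R_\p}^{\b R_\p}(M_\p) = \infty$, and the required inequality is trivial.

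In the remaining case, $\lambda_{\a R_\p}^{\b R_\p}(M_\p)$ is finite, and the key observation is the uniform bound: for any $\q R_\p \in \spec R_\p \setminus \V(\b R_\p)$,
$$
\depth (M_\p)_{\q R_\p} + \height\bigl((\a R_\p + \q R_\p)/\q R_\p\bigr) \leq \height(\q R_\p) + \dim(R_\p/\q R_\p) \leq \dim R_\p \leq 2.
$$
Taking the infimum, $\lambda_{\a R_\p}^{\b R_\p}(M_\p) \leq 2$. On the other hand, since $M_\p$ is finitely generated, $\Gamma_{\a R_\p}(M_\p)$ is annihilated by some power of $\a R_\p$ and hence by some power of $\b R_\p$, so $f_{\a R_\p}^{\b R_\p}(M_\p) \geq 1$. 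Combining these gives $f_{\a R_\p}^{\b R_\p}(M_\p) \geq 1 \geq \lambda_{\a R_\p}^{\b R_\p}(M_\p) - 1$, finishing the pointwise inequality and hence the corollary.

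I do not anticipate a serious obstacle here: Theorem \ref{13} does all the work of reducing a global statement to localizations, and once reduced, the argument is a one-line dimension bound together with Lemma \ref{415} to handle the infinite case. The only care needed is to separate the $\lambda = \infty$ case so that the numerical inequality $\lambda - 1 \leq 1 \leq f$ can be applied unambiguously in the finite case.
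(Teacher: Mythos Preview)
Your argument is correct and uses the same ingredients as the paper's proof: Corollary \ref{414}, Lemma \ref{415}, and the elementary bound $\lambda \leq \dim \leq 2$ together with $f \geq 1$. The only difference is organizational: you invoke Corollary \ref{414} at the outset to reduce everything to a pointwise inequality on each $R_\p$, and then split into the cases $\lambda_{\a R_\p}^{\b R_\p}(M_\p)$ finite or infinite, whereas the paper first splits globally on whether $\lambda_\a^\b(M)_n$ is finite or infinite, handling the finite case directly by $\lambda_\a^\b(M)_n \leq \dim R \leq 2$ and only appealing to Corollary \ref{414} (together with Lemma \ref{415}) in the infinite case. Both orderings are fine; your version is arguably a touch cleaner since the localization is done once rather than only in one branch, while the paper's version avoids invoking \ref{414} when the numerical bound alone already suffices.
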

\begin{proof}
In view of Remark \ref{14}, we may assume that $M$ is finitely generated.
If $\lambda_\a^\b(M)_n < \infty$, then it is easy to see that $\lambda_\a^\b(M)_n \leq \dim{R} \leq 2$.
Hence, we get $f_\a^\b(M)_n \geq \lambda_\a^\b(M)_n - 1$.
Suppose that $\lambda_\a^\b(M)_n = \infty$.
Then $\lambda_{\a R_\p}^{\b R_\p}(M_\p) = \infty$ for all $\p\in\spec{R}_{\geq n}$.
By Lemmma \ref{415}, we have $f_{\a R_\p}^{\b R_\p}(M_\p) = \infty$ for all $\p\in\spec{R}_{\geq n}$.
It follows from Corollary \ref{414} that $f_\a^\b(M)_n = f_\a^\b(M)^n = \infty$.
\end{proof}
\begin{ac}
The author appreciates the support of his advisor Ryo Takahashi.
The author also thanks Takeshi Kawasaki for helpful comments.
\end{ac}

\end{document}